\documentclass[10pt]{amsart}
\usepackage{amsmath}
\usepackage{amssymb,amscd}
\usepackage{graphicx}
\usepackage{mathdots}
\usepackage{mathrsfs}       
\usepackage{graphicx}
\usepackage{bbm} 
\usepackage{microtype} 

\usepackage{colonequals} 
\usepackage{mathtools}

\usepackage{tikz,tkz-euclide}
\usepackage{tikz-cd,calc}
\usetikzlibrary{arrows}
\usepackage{tikz-3dplot}

\usepackage{color}\definecolor{darkblue}{rgb}{0,0.1,.5}
\usepackage[colorlinks=true,linkcolor=darkblue, urlcolor=darkblue, citecolor=darkblue]{hyperref}
\usepackage{cleveref}

\theoremstyle{plain}
\newtheorem{theorem}{Theorem}[section]
\newtheorem{lemma}[theorem]{Lemma}
\newtheorem{proposition}[theorem]{Proposition}
\newtheorem{corollary}[theorem]{Corollary}

\theoremstyle{definition}
\newtheorem{definition}[theorem]{Definition}
\newtheorem{example}[theorem]{Example}

\newtheorem{construction}[theorem]{Construction}

\theoremstyle{remark}
\newtheorem*{remark}{Remark}

\numberwithin{equation}{section}

\def \begineq{\begin{equation}}
\def \endeq{\end{equation}}

\def\Bier{\mathrm{Bier}}

\def\Td{\mathrm{Td}}
\def\sK{\mathcal K}

\def\N{\mathbb N}
\def\Z{\mathbb Z}
\def\Q{\mathbb Q}
\def\R{\mathbb R}
\def\C{\mathbb C}

\DeclareMathAlphabet{\mathbbmsl}{U}{bbm}{m}{sl}

\DeclareMathOperator{\sign}{sign}

\makeatletter
\@namedef{subjclassname@2020}{%
  \textup{2020} Mathematics Subject Classification}
\makeatother

\title[Characteristic numbers of canonical toric manifolds and their applications]{Characteristic numbers of canonical toric manifolds\\ and their applications}

\author[Gruji\'c]{Vladimir Gruji\'c}
\address{Faculty of Mathematics, University of Belgrade, Serbia}
\email{vladimir.grujic@matf.bg.ac.rs}

\author[Limonchenko]{Ivan Limonchenko}
\address{Mathematical Institute of the Serbian Academy of Sciences
and Arts (SASA), Belgrade, Serbia}
\email{ivan.limoncenko@turing.mi.sanu.ac.rs}

\subjclass[2020]{05E45, 13F55, 57R20, 57R42, 57R75, 57R77, 57S12}

\keywords{Bier sphere, face ring, toric manifold, unitary bordism}

\begin{document}

\begin{abstract}
We compute all the Chern, Milnor and Pontryagin numbers for canonical toric manifolds associated with abstract simplicial complexes and the Stiefel--Whitney numbers for their real counterparts. Applications include combinatorial characterizations of the unitary, oriented and unoriented bordism classes, new geometrical representatives of the unitary bordism ring generators, a combinatorial criterion for a canonical toric manifold to bound, as well as the dimension estimates for their immersions into euclidean spaces.
\end{abstract}

\maketitle

\section{Introduction}

In 1992, a remarkable class of simplicial spheres $\mathrm{Bier}(\sK)$ was introduced~\cite{Bier}: each of them is constructed as the deleted join of an arbitrary simplicial complex $\sK$ on $[m]:=\{1,2,\ldots,m\}$, different from the entire simplex $\Delta_{[m]}$ with $m$ vertices, and its combinatorial Alexander dual complex $\sK^\vee$. Since then, Bier spheres have found significant applications in topological and geometrical combinatorics, in particular, those related to the proofs of the classical Van Kampen - Flores theorem, see~\cite{Matousek}, and $g$-theorem, see~\cite{BPSZ05}. Bier spheres also serve as a source of infinitely many non-polytopal spheres, however, no explicit example has been constructed so far. 

A recent construction of the canonical starshaped realization for any Bier sphere obtained in~\cite{JTZ,JZ} justified the importance of Bier spheres in toric geometry~\cite{CLS} and toric topology~\cite{TT}. A canonical characteristic map $\Lambda_\sK$ for each Bier sphere $\Bier(\sK)$ was introduced in~\cite{LS} and then the canonical toric manifolds and the canonical polyhedral products associated with abstract simplicial complexes were constructed and studied in~\cite{LTZ1, LTZ2}. 

In particular, the construction of this new wide class of nonsingular complete toric varieties, one canonical toric manifold $X_\sK$ for each simplicial complex $\sK$ different from the entire simplex, allowed for a short topological proof of the Dehn-Sommerville relations for Bier spheres, see also~\cite{BPSZ05}, and provided immediately the existence of an infinite family of toric manifolds that are not quasitoric, see also~\cite{Suyama}. 

In this paper, we continue the study of the topology of canonical toric manifolds $X_{\sK}$ and $X_{\sK}^\mathbb{R}$, determined by the canonical regular realizations of Bier spheres $\Bier(\sK)$, both in the complex and real cases, respectively. Most of the canonical toric manifolds $X_{\sK}$ are non-projective complete algebraic varieties, since the vast majority of them arise from the canonical fans $\Sigma_{\sK}$ whose underlying simplicial complexes are non-polytopal spheres. It follows from~\cite[Theorem 19, Corollary 20]{JZ} that $X_{\sK}$ is projective if and only if $\sK$ is a threshold complex. 

We calculate characteristic numbers of canonical toric manifolds. It turns out that the Chern numbers $c_I[X_{\sK}]$ depend only on the local combinatorics of the underlying simplicial complex $\sK$, which is encoded in the $\alpha$-vector of $\sK$. The latter combinatorial invariant of a simplicial complex is also interesting from the viewpoint of combinatorial commutative algebra: it was introduced and studied in~\cite{JM}, where it was shown that its components appear as coefficients of the exponential Hilbert polynomial of the Stanley--Reisner ring of $\sK$. 

Once we know the characteristic numbers, the bordism classes of the canonical toric manifolds are determined. One of our main results is Theorem \ref{eqcomplexcobordismh} in which we show that the complex bordism class of the canonical toric manifold $X_{\sK}$ is completely determined by the $h$-vector of the Bier sphere $\mathrm{Bier}(\sK)$. We provide a topological proof for the known formula, see~\cite{BPSZ05}, for the $h$-vector of $\mathrm{Bier}(\sK)$ in terms of the $f$-vector of $\sK$ using the Hirzebruch $\chi_y$-genus, which also leads us to the signature formula for $X_\sK$.

The strong structure property of the topology of canonical toric manifolds comes from the natural involution induced by the duality between a complex $\sK$ and its Alexander dual $\sK^\vee$. We prove that the $\alpha$-vectors of $\sK$ and $\sK^\vee$ are related by an involutive matrix $A$. While the rank of a complex bordism group equals the partition number $\dim\Omega^U_{2(m-1)}=\pi(m-1)$ and increases rapidly with $m$, the rank of its subgroup spanned by the $2(m-1)$-dimensional canonical toric manifold classes is equal to $\lfloor\frac{m+1}{2}\rfloor$. In Theorem~\ref{additive}, we show that the canonical toric manifolds $X_j:=X_{\sK_j}, \ 0\leq j\leq m-1$, where $\sK_{j}:=\Delta_{[j]}\ast\partial\Delta_{[m-j]}$ form a spanning set of this free abelian subgroup in the real dimension $2m-2$ and find its basis. Each complex manifold $X_j$ is diffeomorphic to the product of complex projective spaces, but with a nonstandard complex structure. By calculating the Milnor numbers, we determine which canonical toric manifolds may serve as representatives of the polynomial generators of the complex bordism ring $\Omega^U_\ast$. 

In the real case, the dependence of the Stiefel--Whitney and Pontryagin classes is more simple and involves only the Euler characteristic $\chi(\sK)$. We determine the unoriented and oriented bordism classes of the canonical toric manifolds. As a consequence, we obtain a large source of examples of manifolds which belong to the kernel of the forgetting morphisms $\Omega_\ast^U\rightarrow\Omega_\ast^{SO}$ and $\Omega_\ast^{SO}\rightarrow\Omega_\ast^O$. 

We also study the problem of immersions of the canonical real toric manifolds into euclidean spaces and provide the estimate of dimension of the euclidean space that allows immersions. In each dimension, we construct a family of canonical real toric manifolds on which the upper bound of the minimal dimension of the Euclidean space in which they can be immersed is achieved. 

\subsection*{Acknowledgements}
We are grateful to  Taras Panov, Ale\v{s} Vavpeti\v{c}, and Rade \v{Z}ivaljevi\'c for various fruitful discussions, valuable comments and suggestions. The first author is partially supported by the Ministry of  Science, Technological Development and Innovation, Republic of Serbia, through the project 451-03-33/2026-03/200104. The second author was supported by the Serbian Ministry of Science, Technological Development and Innovation through the Mathematical Institute of the Serbian Academy of Sciences and Arts.

\section{Basic definitions and constructions}

In this section, we recall some key definitions and results concerning simplicial complexes, Bier spheres, and toric manifolds.
We start with abstract simplicial complexes and rational polyhedral fans. In what follows, for any positive integer $m\in\N$, we use the notation $[m]$ for the set $\{1,2,\ldots,m\}$.

\begin{definition}
Let $\sK$ be a subset of the power set $2^{[m]}$. We call $\sK$ an (abstract) \emph{simplicial complex} on $[m]$ if the following condition holds:
$$
\sigma\in \sK\text{ and }\tau\subset\sigma \Longrightarrow \tau \in \sK.
$$
\end{definition}

Elements of $\sK$ are its simplices (or, faces). Maximal, with respect to inclusion, faces of $\sK$ are called its \emph{facets} and their set is denoted by $\max(\sK)$. Similarly, the set of \emph{minimal non-faces} of $\sK$ with respect to inclusion is denoted by $\min(\sK)$. Note that each of the sets $\max(\sK)$ and $\min(\sK)$ determines the simplicial complex $\sK$ on $[m]$. Recall that the \emph{dimension} of $\sK$ is one less than the maximal cardinality of a face of $\sK$; this number is denoted by $\dim \sK$.

We say that a singleton $\{i\}$ (or, simply $i$ itself) is a real (or, geometrical) \emph{vertex} of $\sK$ if $\{i\}\in \sK$; we call it a \emph{ghost vertex} otherwise. The set of real vertices of $\sK$ will be denoted by $V(\sK)$. 

\begin{example}
Given any $m\in\N$ we consider the following simplicial complexes $\sK$ on $[m]$:
\begin{itemize}
    \item $\Delta_{[m]}:=2^{[m]}$, the simplex. It has $\dim(\sK)=m-1$ and $V(\sK)=[m]$;
    \item $\partial\Delta_{[m]}:=2^{[m]}\setminus \{[m]\}$, the boundary of a simplex. It has $\dim \sK=m-2$ and $V(\sK)=[m]$;
    \item 
    $\varnothing_{[m]}:=\{\varnothing\}$, the void complex. It has $\dim \sK=-1$ and $V(\sK)=\varnothing$.
\end{itemize}
\end{example}

In convex geometry, the analogue of a simplicial complex is called a fan. Geometrically, a simplicial complex is a collection of simplices and, similarly, a fan is a collection of cones. 

\begin{definition}
Consider the standard integer lattice $\Z^n\subset\R^n$. Each collection of vectors $S=\{v_1,\ldots,v_s\}\subset\Z^n$ generates a (rational polyhedral) \emph{cone} in $\R^n$ as follows:
$$
\sigma=\R_{\ge}\langle S\rangle:=\{r_1v_1+\cdots+r_sv_s\,|\,r_i\geq 0\text{ for each }1\leq i\leq s\}\subseteq\R^n.
$$
In this paper, we consider only \emph{strongly convex} cones $\sigma$; that is, $\sigma$ does not contain any line in $\R^n$. A cone is called \emph{simplicial} (respectively, \emph{nonsingular}) if it is generated by a part of a basis of $\R^n$ (respectively, $\Z^n$).
\end{definition}

\begin{definition}\label{FanDef}
A (rational polyhedral) \emph{fan} in $\R^n$ is a set $\Sigma$ of cones in $\R^n$ such that each face of a cone in $\Sigma$ is again a cone of $\Sigma$ and the intersection of any two cones in $\Sigma$ is a face of each of them. A fan $\Sigma$ in $\R^n$ is called
\begin{itemize}
\item \emph{complete}, if the union of all its cones equals $\R^n$;
\item \emph{simplicial}, if all its cones are simplicial;
\item \emph{regular}, if all its cones are nonsingular.
\end{itemize}
\end{definition}

\begin{example}\label{NormalFanExample}
Suppose $P$ is a lattice simple polytope in $\R^n$.  Then the fan $\Sigma_P$ whose maximal (by inclusion) cones are generated by the primitive integer normal vectors to facets intersecting at a vertex of $P$ is called a \emph{normal fan} of $P$. Obviously, it is a complete simplicial fan. We say that $P$ is a \emph{Delzant polytope} if its normal fan is regular.
\end{example}

Finally, we introduce a construction that relates fans to simplicial complexes. Suppose $\Sigma$ is a simplicial fan and consider the $m$-tuple $(v_1,\ldots,v_m)$ of primitive integer generators of the 1-dimensional cones of $\Sigma$. This determines the \emph{underlying simplicial complex} $\mathcal K_\Sigma$ on $[m]$ as follows: by definition, $\{i_i,\ldots,i_p\}\in \mathcal K_\Sigma$ if and only if the vectors $v_{i_1},\ldots,v_{i_p}$ generate a cone in $\Sigma$. In particular, $\Sigma$ is a complete simplicial fan if and only if $\mathcal K_\Sigma$ is a starshaped sphere, see~\cite{TT}. 

We say that a simple polytope $P$ has a \emph{Delzant realization} if and only if its \emph{nerve complex} $K_P=\partial P^*$ is combinatorially equivalent to $\mathcal K_\Sigma$, where $\Sigma$ is a complete regular fan. On the other hand, not every simple polytope has a Delzant realization even in dimension $3$: the dodecahedron can serve as a counterexample, since it has neither triangular nor quadrangular faces, see~\cite{Delaunay}. Hence, more generally, we say that a simplicial sphere $\sK$ has a \emph{regular realization} if it is isomorphic to the underlying complex of a complete regular fan.

Now we are going to introduce the main combinatorial object of interest in this paper, the Bier sphere. Let $
\sK\neq\Delta_{[m]}$ be a simplicial complex on $[m]$ with $m\geq 2$. In what follows, we consider the copy $[m']:=\{1',2',\ldots,m'\}$ of the set $[m]$ such that $[m]\cap [m']=\varnothing$. 

\begin{definition}
The (combinatorial) \emph{Alexander dual} of $\sK$ is a simplicial complex $\sK^\vee$ on $[m']$ such that 
$$
I'\in \max(\sK^\vee)\Longleftrightarrow I^c\in\min(\sK).
$$
That is, (maximal) faces of $\sK^\vee$ correspond to complements of (minimal) non-faces of $\sK$.

The Bier sphere $\Bier(\sK)$ of $\sK$ is a simplicial complex on $[m]\sqcup [m']$ such that
$$
I\sqcup J'\in\Bier(\sK)\Longleftrightarrow I\in \sK, J'\in \sK^\vee\text{ and }I\cap J=\varnothing.
$$
That is, $\Bier(\sK)$ is a \emph{deleted join} $\Bier(\sK):=\sK\ast_\delta \sK^\vee$ of $\sK$ and its Alexander dual complex $\sK^\vee$. 
\end{definition}

The $(m-2)$-dimensional simplicial sphere $\mathrm{Bier}(\sK)$ has a regular realization in $\mathbb{R}^{m-1}$ as follows.
Denote by $(e_1,\ldots,e_{m-1})$ the standard basis in $\R^{m-1}$ and set $e_{m}:=-(e_1+\cdots+e_{m-1})$.

\begin{construction}[\cite{LTZ1}]
Let $I, J\subset [m]$ and $I\cap J=\varnothing$. We consider the following sets:
$$
G(I,J):=\{-e_{i}, e_{j}\,|\,i\in I, j\in J\},
C(I,J):=\R_{\ge}\langle G(I,J)\rangle,\text{ and }\Sigma_\sK:=\{C(I,J)\,|\,I\in \sK, J'\in \sK^\vee\}.
$$
\end{construction}

\begin{example}
For each $m\geq 2$,  $\Bier(\varnothing_{[m]})\cong\Bier(\partial\Delta_{[m]})$ is the boundary of the simplex $\Delta^{m-1}$. Furthermore, $\sK=\varnothing_{[m]}$ yields a complete fan $\Sigma_\sK$ whose rays are generated by the vectors from the set $\{e_1,\ldots,e_{m-1},e_m\}$. The toric variety corresponding to this fan is the complex projective space and we obtain $X_{\sK}=\C P^{m-1}$.
\end{example}

If $\sK\neq\Delta_{[m]}$ is a simplicial complex on $[m]$ with $m\geq 2$, it was shown in~\cite{LTZ2} that $\Sigma_\sK$ is a complete regular fan whose underlying simplicial complex is isomorphic to the Bier sphere $\Bier(\sK)$. By definition, the rays of the \emph{canonical fan} $\Sigma_\sK$ described above are the columns of the \emph{canonical characteristic matrix} $\Lambda_\sK$. In the framework of toric geometry~\cite{CLS}, these data give rise to the \emph{canonical toric manifold} $X_\sK$ and its real counterpart $X^{\R}_\sK$. 

Recall that the \emph{Stanley--Reisner ring} (or, a \emph{face ring}) of a simplicial complex $\sK$ with $m$ vertices over a ring with unit $\Bbbk$ is defined as a quotient ring
$$
\Bbbk[\sK]:=\Bbbk[x_1,\ldots,x_m]/\mathcal I,
$$
where the \emph{Stanley--Reisner ideal} (or, \emph{face ideal}) of $\sK$ is generated by all square-free monomials of the form $x_{i_1}\cdots x_{i_r}$ with $\{i_1,\ldots,i_r\}\notin \sK$.

The integer cohomology ring of the canonical toric manifold $X_\sK$ is determined by the Danilov-Jurkiewicz theorem, see~\cite{Dan}, as the quotient ring
\begin{center} 
$H^*(X_\sK;\mathbb{Z})\cong\Z[\Bier(\sK)]/\mathcal J=\mathbb{Z}[x_1,\ldots,x_m, x'_1,\ldots, x'_m]/_{\mathcal{I}+\mathcal{J}}$,
\end{center} 
where $\mathcal{I}$ is the Stanley-Reisner ideal of the complex $\mathrm{Bier}(\sK)$, which has the following form: 
\[
\mathcal{I}=\mathcal{I}_\sK+\mathcal{I}_{\sK^\vee}+\langle x_ix'_i, i=1,\ldots,m\rangle
\] 
with $\mathcal{I}_\sK$ and $\mathcal{I}_{\sK^\vee}$ being the Stanley--Reisner ideals of $\sK$ and its Alexander dual $\sK^\vee$, and $\mathcal{J}$ is the ideal of linear dependencies of columns of the canonical characteristic matrix $\Lambda_\sK$. By the definition of $\Lambda_\sK$, the ideal $\mathcal J$ yields the following relations among the cohomology ring generators:
\begin{center} 
$x_i-x'_i=x_m-x'_m,\quad i=1,\ldots,m-1$. 
\end{center}

The toric manifold $X_\sK$ is a nonsingular complete algebraic variety equipped with the canonical complex structure. The natural isomorphism of Bier spheres of $\sK$ and $\sK^\vee$ induces a biholomorphic equivalence $\varphi\colon X_\sK\rightarrow X_{\sK^\vee}$ which comes from the symmetry of the corresponding regular fans $\Sigma_{\sK^\vee}=-\Sigma_\sK$. The induced isomorphism in cohomology $\varphi^\ast\colon H^\ast(X_{\sK^\vee};\mathbb{Z})\rightarrow H^\ast(X_\sK;\mathbb{Z})$ acts as the identity map. 

If $\sK$ has a ghost vertex $v\in [m]$, then $[m]\setminus\{v\}\in \sK^\vee$. If additionally $v$ is a ghost vertex of $\sK^\vee$, then the Bier sphere of $\sK$ is isomorphic to the boundary of the cross-polytope $\partial\diamondsuit^{m-1}$ which yields 
$X_\sK\cong(\mathbb{C}P^1)^{m-1}$ and $X^\mathbb{R}_\sK\cong(\mathbb{R}P^1)^{m-1}$. Henceforth, we may assume that $\sK$ is without ghost vertices.
 
Finally, the real toric manifold $X^\mathbb{R}_\sK$ is the real locus of the toric manifold $X_\sK$ and is equipped with the canonical smooth structure that turns it into a smooth $(m-1)$-dimensional manifold. The mod 2 cohomology ring of $X_\sK^\mathbb{R}$ is represented by the same formula as in the integer case, but with reduced coefficients:

 \begin{center} 
 $H^*(X_\sK^\mathbb{R};\mathbb{Z}_2)\cong\mathbb{Z}_2[x_1,\ldots,x_m, x'_1,\ldots, x'_m]/_{\mathcal{I}+\mathcal{J}}.$
 \end{center}

For the explicit computation of the rational Betti numbers of $X_\sK^{\R}$ and the partial description of the multiplicative structure of the cohomology ring $H^*(X_\sK^{\R};\Q)$, see~\cite{CYY}. 

\section{Characteristic numbers}

In this section, we prove general formulas expressing characteristic numbers of real and complex canonical toric manifolds in combinatorial terms and discuss some applications of our computations. 

\subsection{Chern numbers}

The total Chern class of the canonical toric manifold $X_\sK$ is determined by
\[c(X_\sK)=\prod_{i=1}^m(1+x_i+x'_i).\] 
Therefore, 
\[
c_k(X_\sK)=\sum_{\substack{\sigma\in \sK,\tau\in \sK^\vee \\ \sigma\cap\tau=\emptyset, |\sigma\cup\tau|=k}}x_\sigma x'_\tau,
\] 
which is the sum over all $(k-1)$-faces of $\mathrm{Bier}(\sK)$. In particular, the Euler characteristic of $X_\sK$ is equal to the number of facets of $\mathrm{Bier}(\sK)$:
\[
\chi(X_\sK)=c_{m-1}[X_\sK]=f_{m-2}(\mathrm{Bier}(\sK)).
\]
We calculate all the Chern numbers of the canonical toric manifold $X_\sK$. Since the Chern classes of $X_\sK$ are symmetric functions of two types of variables $c_k=\sigma_k(x_1,\ldots,x_m,x'_1,\ldots,x'_m), k=1,\ldots,m-1$, they can be represented in symmetric functions of these variables separately 
\begin{equation}\label{separated} c_k=\sum_{r=0}^k\sigma_r\sigma'_{k-r}, \ \  k=1,\ldots, m-1,\end{equation} where 
$\sigma_0=\sigma'_0=1, \sigma_k=\sigma_k(x_1,\ldots,x_m), \sigma'_k=\sigma_k(x'_1,\ldots,x'_m), k=1,\ldots,m-1$.
According to the ideal $\mathcal{J}$ of linear relations, we may introduce a new variable \[u=x'_i-x_i, \ \ i=1,\ldots,m.\] For $\tau=\{i_1,\ldots,i_k\}\notin \sK^\vee$ we have
\[x'_\tau=\prod_{j=1}^k(x_{i_j}+u)=u^k+\sigma_1|_{\tau}u^{k-1}+\ldots+\sigma_{k-1}|_{\tau}u+\sigma_k|_{\tau},\] where $\sigma_j|_{\tau}=\sigma_j(x_{i_1},\ldots,x_{i_k})$. Hence, the cohomology ring of $X_\sK$ is isomorphic to
\[H^\ast(X_\sK,\mathbb{Z})\cong\mathbb{Z}[x_1,\ldots,x_m, u]/_{\mathcal{I}'+\mathcal{J}'},\] where \[\mathcal{I}'=\mathcal{I}_\sK+\langle u^{|\tau|}+\sigma_1|_{\tau}u^{|\tau|-1}+\ldots+\sigma_{|\tau|-1}|_{\tau}u+\sigma_{|\tau|}|_{\tau}, \tau\notin \sK^\vee\rangle\] and \[\mathcal{J}'=\langle x_i^2+ux_i, i=1,\ldots m\rangle.\]

Symmetric functions $\sigma'_k$ are in translated variables and therefore are expressed as
\begin{equation}\label{translated}\sigma'_k=\sum_{j=0}^k{m-j \choose k-j}\sigma_j u^{k-j}, \ k=1,\ldots,m-1.\end{equation} For a partition $i_1+\cdots+i_p=m-1$ the following identity holds from $(\ref{separated})$ and $(\ref{translated})$
\[c_{i_1}\cdots c_{i_p}=\sum_{\substack{0\leq r_1\leq i_1 \\ \cdots \\ 0\leq r_p\leq i_p}}\sigma_{r_1}\cdots\sigma_{r_p}\sigma'_{s_1}\cdots\sigma'_{s_p}=\]\[=\sum_{\substack{0\leq r_1\leq i_1 \\ \cdots \\ 0\leq r_p\leq i_p}}\sum_{\substack{0\leq j_1\leq s_1 \\ \cdots \\ 0\leq j_p\leq s_p}}{m-j_1 \choose s_1-j_1}\cdots{m-j_p \choose s_p-j_p}\sigma_{r_1}\cdots\sigma_{r_p}\sigma_{j_1}\cdots\sigma_{j_p}u^{s_1+\cdots+s_p-(j_1+\cdots+j_p)},\] where $r_1+s_1=i_1,\ldots,r_p+s_p=i_p$.  Denote \[\sigma_\lambda=\sigma_{r_1}\cdots\sigma_{r_p}\sigma_{j_1}\cdots\sigma_{j_p},\] for a partition $\lambda\vdash r_1+\cdots+r_p+j_1+\cdots+j_p=|\lambda|$. Let $(M_{\lambda\mu})$ be the transition matrix from the monomial to elementary symmetric functions \[\sigma_\lambda=\sum_{\mu\vdash|\lambda|}M_{\lambda\mu}m_\mu.\] The coefficients $M_{\lambda\mu}$ have a precise combinatorial interpretation as the number of $0-1$ matrices whose columns and rows have sums of elements equal to components of the partitions $\lambda$ and $\mu$ respectively. If $\mu=1^{d_1^\mu}2^{d_2^\mu}\cdots$ and $l(\mu)$ is the length of the partition $\mu$, then according to the relations in the ideal $\mathcal{J}'$ we obtain the following
\[\sigma_{\lambda}=\sum_{\mu\vdash|\lambda|}M_{\lambda\mu}{l(\mu) \choose d_1^\mu,d_2^\mu,\cdots}\sigma_{l(\mu)}(-u)^{|\mu|-l(\mu)},\] which leads to the formula for $c_I=c_{i_1}\cdots c_{i_p}$
\[c_I=\sum_{\substack{\lambda\vdash r_1+\cdots+r_p+j_1+\cdots+j_p \\ 0\leq r_1\leq i_1,\ldots,0\leq r_p\leq i_p \\ 0\leq j_1\leq s_1,\ldots,0\leq j_p\leq s_p}}\prod_{t=1}^p{m-j_t \choose s_t-j_t}\sum_{\mu\vdash|\lambda|}(-1)^{|\mu|-l(\mu)}M_{\lambda\mu}{l(\mu) \choose d_1^\mu,d_2^\mu,\ldots}\sigma_{l(\mu)} u^{m-1-l(\mu)}.\]
Rearranging the summation, we obtain the following identity 
\begin{equation}\label{chernnumbers}
c_I=\sum_{k=0}^{m-1}\gamma_{I,k}\sigma_ku^{m-1-k},
\end{equation} where the coefficients are defined purely combinatorially 
\begin{equation}\label{defgamma} 
\gamma_{I,k}=\sum_{\substack{\mu\colon l(\mu)=k, \\ |\mu|\leq m-1}}(-1)^{|\mu|-k}{k \choose d_1^\mu, d_2^\mu,\cdots}\sum_{\substack{\lambda\vdash r_1+\cdots+r_p+j_1+\cdots+j_p=|\mu| \\ 0\leq r_1\leq i_1,\ldots,0\leq r_p\leq i_p \\ 0\leq j_1\leq s_1,\ldots,0\leq j_p\leq s_p}}M_{\lambda\mu}\prod_{t=1}^p{m-j_t \choose s_t-j_t}.
\end{equation}
Thus, there are universal vectors indexed by partitions
\[\gamma_I=(\gamma_{I,0},\gamma_{I,1},\ldots,\gamma_{I,m-1})^t, \ I\vdash m-1.\]
Up to now we only used the linear relations in the cohomology ring together with missing all edges connecting opposite vertices in the Bier sphere. The combinatorics of the simplicial complex appears through the identities
\begin{equation}\label{combinatorialchernclass}
\sigma_k=\sum_{S\in \sK, |S|=k}x_S,\ \ k=1,\ldots,m-1.
\end{equation} We have to calculate \[x_S u^{m-1-|S|}[X_\sK], S\in \sK.\] For a given face $S\in \sK$ and a vertex $i_0\in[m]$ such that $i_0\notin S$ we have
\[x_S u^{m-1-|S|}=x_S\prod_{i_0\neq i\notin S}(x'_i-x_i)=x_S\sum_{\substack{A\cup S\in \sK\setminus i_0 \\ B\in \sK^\vee\setminus i_0 \\ S\sqcup A\sqcup B=[m]\setminus\{i_0\}}}(-1)^{|A|}x_Ax'_B.\] If $S$ is a facet, then $x_Su^{m-1-|S|}[X_\sK]=1$ since $(S,i_0,[m]\setminus(S\cup\{i_0\})$ is a facet of $\mathrm{Bier}(\sK)$. For $S$ not being a maximal face, we choose $i_0\notin S$ such that $S\cup\{i_0\}\in \sK$. The above identity gives 
\[x_Su^{m-1-|S|}[X_\sK]=\sum_{\substack{A\sqcup S\in \sK\setminus i_0 \\ A\sqcup S\sqcup\{i_0\}\notin \sK}}(-1)^{|A|}=\sum_{A\in(\mathrm{link}_{\sK}S\setminus i_0)\setminus\mathrm{link}_{\mathrm{link}_{\sK}S}\{i_0\}}(-1)^{|A|}.\] Therefore, 
\[x_Su^{m-1-|S|}[X_\sK]=\chi(\mathrm{link}_{\mathrm{link}_{\sK}S}\{i_0\})-\chi(\mathrm{link}_{\sK}S\setminus i_0).\] By applying the additivity of Euler characteristic,
\[\chi(\sK\setminus v)+1-\chi(\mathrm{link}_{\sK}v)=\chi(\sK)\] to $\mathrm{link}_{\sK}S$, we obtain the following formula.

\begin{lemma}\label{face}
For any face $S\in \sK$, it holds \[x_Su^{m-1-|S|}[X_\sK]=1-\chi(\mathrm{link}_{\sK}S).\]
\end{lemma}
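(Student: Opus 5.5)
The computation just before the statement already carries most of the argument, so the plan is to finish it and check the conventions. Fix a face $S\in\sK$ and write $L:=\mathrm{link}_{\sK}S$, a simplicial complex on $[m]\setminus S$. There are two cases.

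\emph{Case 1: $S$ is a facet.} Then $L=\{\varnothing\}$, so $\chi(L)=0$ with the reduced-type convention $\chi=\sum_{A\in L}(-1)^{|A|}$ used in the additivity formula. The product expansion with any $i_0\notin S$ leaves only the term $A=\varnothing$, $B=[m]\setminus(S\cup\{i_0\})$. This term is the monomial of the facet $S\sqcup B'$ of $\mathrm{Bier}(\sK)$, and it evaluates to $1$ because $X_\sK$ is a toric manifold with a regular fan. This agrees with $1-\chi(L)=1$. I should double-check that $B'\in\sK^\vee$. Since $S$ is a facet, $S\cup\{i_0\}\notin\sK$ contains a minimal nonface, so its complement $B$ lies in $\sK^\vee$.

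\emph{Case 2: $S$ is not a facet.} Pick $i_0$ with $S\cup\{i_0\}\in\sK$. Expand $u^{m-1-|S|}=\prod_{i\neq i_0,\ i\notin S}(x'_i-x_i)$ and use $x_ix'_i=0$. The product $x_Sx_Ax'_B$ with $S\sqcup A\sqcup B=[m]\setminus\{i_0\}$ is nonzero only if $S\sqcup A\in\sK$ and $B'\in\sK^\vee$. The condition $B'\in\sK^\vee$ says that the complement $S\sqcup A\sqcup\{i_0\}$ is a nonface. In that case $S\sqcup A\sqcup B'$ is a facet of $\mathrm{Bier}(\sK)$ and the monomial evaluates to $1$. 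Otherwise the monomial is $0$, since it is not in $\mathrm{Bier}(\sK)$; if $B'\notin\sK^\vee$, then $x'_B$ already vanishes in the Stanley--Reisner ring. One point needs care: all facets must evaluate to $+1$ with a consistent orientation. I will trust the ordering sign conventions implicit in the setup, where the canonical characteristic matrix gives determinant $\pm1$ compatibly with the complex orientation. This yields $\sum(-1)^{|A|}$ over $A\in(L\setminus i_0)\setminus \mathrm{link}_L\{i_0\}$, where we identify $\mathrm{link}_L\{i_0\}$ with those $A$ for which $A\cup\{i_0\}\in L$.

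Finally, split the sum: the sum over $L\setminus i_0$ minus the sum over $\mathrm{link}_L\{i_0\}$ equals $\chi(L\setminus i_0)-\chi(\mathrm{link}_L i_0)$. This is the negative of the displayed expression before the statement, and I will recheck that sign carefully. Apply additivity to $L$ and the vertex $i_0$: every face of $L$ either avoids $i_0$ or is $\{i_0\}\cup A$ with $A\in\mathrm{link}_L i_0$. Hence
\[\sum_{A\in L}(-1)^{|A|}=\sum_{L\setminus i_0}(-1)^{|A|}-\sum_{\mathrm{link}_L i_0}(-1)^{|A|}.\]
With the paper's convention $\chi(\sK\setminus v)+1-\chi(\mathrm{link}v)=\chi(\sK)$, where $\chi$ is unreduced, this gives $1-\chi(L)$ exactly. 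The main obstacle is bookkeeping the Euler characteristic convention (reduced versus unreduced, and the sign of $(-1)^{|A|}$ against the dimension), so that the facet case and the general case both give $1-\chi(L)$. I would verify this on $\sK=\varnothing_{[m]}$, $S=\varnothing$, where $X_\sK=\C P^{m-1}$ and $u$ is a generator up to sign.
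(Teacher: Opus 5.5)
Your proposal is correct and is essentially the paper's proof: expand $\prod_{i\neq i_0,\,i\notin S}(x'_i-x_i)$, keep only the monomials supported on facets of $\Bier(\sK)$, and cancel $A$ against $A\cup\{i_0\}$ to obtain $\sum_{A\in L}(-1)^{|A|}=1-\chi(L)$; each facet monomial evaluates to $+1$ because the divisors of a maximal cone of a complete regular fan meet transversally in a single fixed point, so no determinant or ordering signs enter. The convention issues you flag are only slips: $\sum_{A\in L}(-1)^{|A|}$ equals $1-\chi(L)$ with $\chi$ unreduced (it is $1$ for $L=\{\varnothing\}$), so your difference is $\chi(\mathrm{link}_{L}\{i_0\})-\chi(L\setminus i_0)$, exactly the paper's expression rather than its negative, and the facet case needs no separate treatment since the subtracted sum is empty when $\{i_0\}\notin L$.
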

Note that the lemma covers the case, where $S$ is a facet, since in that case $\mathrm{link}_{\sK}S=\emptyset$. For $S$ being the empty face, we assume $\mathrm{link}_{\sK}\emptyset=K$. The lemma also shows that our choice of vertex $i_0$ is not relevant.

According to $(\ref{combinatorialchernclass})$, the following numbers are associated with each simplicial complex $\sK$ on $[m]$ with $m\geq 2$:
\[
\alpha_k(\sK)=\sigma_ku^{m-k-1}[X_{\sK}]=\sum_{\substack{S\in \sK, \\ |S|=k}}\Big(1-\chi(\mathrm{link}_{\sK}S)\Big), \ \ k=0,1,\ldots,m-1.
\] 

\begin{definition}
Given a simplicial complex $\sK$ on $[m]$ with $m\geq 2$, the integer vector 
\[
\alpha(\sK)=(\alpha_0(\sK),\alpha_1(\sK),\ldots,\alpha_{m-1}(\sK))^t
\] 
we call the \emph{$\alpha$-vector} of the complex $\sK$.
\end{definition}

Once we know the Chern classes of $[X_\sK]$, we can calculate the Chern numbers and determine the complex bordism class of the toric manifold $X_\sK$.
Given a simplicial complex $\mathcal K$, we denote by $f(\mathcal K)$ its \emph{$f$-vector}. That is, $f_{i}(\sK)$ equals the number of faces of $\sK$ of dimension $i$, where $-1\leq i\leq \dim \sK + 1$. Note that $f_{-1}(\sK)=1$ for any simplicial complex $\sK$.

\begin{proposition}\label{complexcobordism}
Let $\sK_1$ and $\sK_2$ be two simplicial complexes  with $f(\sK_1)=f(\sK_2)$. Then $[X_{\sK_1}]=[X_{\sK_2}]$ in the unitary bordism ring $\Omega^U_*$.
\end{proposition}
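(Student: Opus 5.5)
By the Milnor–Novikov theorem, a unitary bordism class is determined by its Chern numbers. It therefore suffices to show that every Chern number $c_I[X_\sK]$, for $I\vdash m-1$, depends only on $f(\sK)$ (together with $m$).

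The computation above already gives this reduction. By (\ref{chernnumbers}),
\[
c_I[X_\sK]=\sum_{k=0}^{m-1}\gamma_{I,k}\,\alpha_k(\sK),
\]
and the coefficients $\gamma_{I,k}$ in (\ref{defgamma}) are universal: they depend only on $I$ and $m$, not on $\sK$. So it is enough to prove that the $\alpha$-vector $\alpha(\sK)$ is a function of $f(\sK)$. One small point needs checking first. If $f(\sK_1)=f(\sK_2)$, the two complexes should live on the same $[m]$, so that the $X_{\sK_i}$ have the same dimension. I would take this as implicit: both are complexes on $[m]$ without ghost vertices, so $m=f_0$.

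The main step is to express $\alpha_k$ through the $f$-vector. Using $\chi(L)=\sum_{A\in L,\,A\neq\emptyset}(-1)^{|A|-1}$ for a link $L$, I would rewrite
\[
\alpha_k(\sK)=\sum_{|S|=k}\Big(1-\chi(\mathrm{link}_\sK S)\Big)=\sum_{|S|=k}\ \sum_{A\in \mathrm{link}_\sK S}(-1)^{|A|},
\]
where the sum over $A$ includes $A=\emptyset$. Next I would reindex by the face $T=S\sqcup A\in\sK$. Each face $T$ with $|T|=j\ge k$ appears exactly $\binom{j}{k}$ times, once for each $k$-subset $S\subset T$. This gives
\[
\alpha_k(\sK)=\sum_{j\ge k}(-1)^{j-k}\binom{j}{k}f_{j-1}(\sK),
\]
which is a universal linear function of the $f$-vector, and that finishes the proof.

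The only delicate points are conventions. For $S=\emptyset$ one takes $\mathrm{link}_\sK\emptyset=\sK$. For a facet, the link is $\{\emptyset\}$ with $\chi=0$, so the term equals $1$, and this is consistent with the reindexing.
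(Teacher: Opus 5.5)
Your proposal is correct and is essentially the paper's proof: you use $c_I[X_\sK]=\sum_k\gamma_{I,k}\alpha_k(\sK)$ with universal coefficients $\gamma_{I,k}$, and then show $\alpha_k(\sK)=\sum_{j\ge k}(-1)^{j-k}\binom{j}{k}f_{j-1}(\sK)$, which is exactly the paper's formula; your double count over pairs $S\subseteq T\in\sK$ simply spells out the ``standard enumerative argument'' that the paper only cites. Your remark that both complexes must live on the same $[m]$ is also left implicit in the paper; under the no-ghost-vertex convention it is guaranteed by $m=f_0$.
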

\begin{proof}
According to $(\ref{chernnumbers})$, the Chern numbers of $X_\sK$ depend on the complex $\sK$ in the following way
\begin{equation}\label{alpha} 
c_I[X_\sK]=\langle\alpha(\sK),\gamma_{I}\rangle=\sum_{k=0}^{m-1}\alpha_k(\sK)\gamma_{I,k}, \ I\vdash m-1.
\end{equation}  
By the standard enumerative argument, we obtain ($\alpha_i:=\alpha_i(\sK), f_i:=f_i(\sK)$):
\begin{equation}\label{alphaExplicitFormula} 
\alpha_k=(-1)^k\sum_{j=k}^{m-1}(-1)^jf_{j-1}{j \choose k}, \ \ k=0,1,\ldots,m-1.
\end{equation}
This finishes the proof. \end{proof}

\begin{remark}
The $\alpha$-vector was introduced in~\cite{JM} under the name \emph{e-vector}. There, it was shown that 
\begin{itemize}
\item $\sK$ satisfies the Dehn-Sommerville relations $h_k=h_{n-k}$ for all $0\leq k\leq n$, where, by definition,
$$
\sum\limits_{k=0}^{n}h_kt^{n-k}:=\sum\limits_{k=0}^{n}f_{n-1-k}(t-1)^k
$$
if and only if 
$$
\alpha_k=(-1)^{n+k}f_{k-1}\text{ for all }0\leq k\leq n;
$$
\item Denote by $E(M;x_1,\ldots,x_m):=\sum\limits_{\bf{a}\in\N^m}\dim_{\Bbbk}(M_{\bf{a}})\frac{\bf{x^a}}{\bf{a!}}$ the \emph{exponential Hilbert series} of a $\N^m$-graded $\Bbbk[x_1,\ldots,x_m]$-module $M$. Then one has:
$$
E(\Bbbk[\sK];t)=\sum\limits_{\sigma\in \sK}(e^{t}-1)^{|\sigma|}=\alpha_0+\alpha_1e^t+\alpha_2e^{2t}+\alpha_{n}e^{nt}.
$$ 
\end{itemize}
\end{remark}

It is easy to see that the explicit formulae for the $\alpha$-vector components obtained in the above proof are equivalent to the following identity:
$$
\sum\limits_{p=0}^{m-1}f_{p-1}(t-1)^{p}=\sum\limits_{k=0}^{m-1}\alpha_{k}t^k.
$$ 
It follows that the  $\alpha$- and $f$-vectors of any simplicial complex $\sK\neq\Delta_{[m]}$ on $[m]$ with $m\geq 2$ determine each other.

\begin{example}
If $\Gamma$ is a graph, i.e. a simplicial complex of dimension $\leq 1$, we obtain the following formula
\[c_I[X_\Gamma]=\gamma_{I,0}(1-\chi(\Gamma))+\gamma_{I,1}(v-\sum_{v\in\Gamma}d_v)+\gamma_{I,2}e.\] Since the sum of degrees $d_v$ of vertices is twice the number of edges $e$, two graphs $\Gamma_1$ and $\Gamma_2$ produce the unitary bordant canonical toric manifolds $X_{\Gamma_1}$ and $X_{\Gamma_2}$ if and only if the graphs have the same numbers of vertices and edges.
\end{example}

The same observation as for $(\ref{chernnumbers})$ shows that
\[c_I=\sum_{k=0}^{m-1}\gamma_{I,k}\sigma_k'v^{m-k-1},\] where $v=-u$.
From $(\ref{translated})$ it follows
\[c_I=\sum_{k=0}^{m-1}\gamma_{I,k}(-1)^{m-k-1}\sum_{j=0}^k{m-j\choose k-j}\sigma_ju^{m-j-1},\]
This leads us to the following numbers associated with a simplicial complex $\sK$ on $[m]$ with $m\geq 2$:
\[
\mu_k(\sK)=(-1)^{m-k-1}\sum_{j=0}^k{m-j\choose k-j}\alpha_j(\sK), \ k=0,1,\ldots,m-1.
\]

\begin{definition}
Given a simplicial complex $\sK$ on $[m]$ with $m\geq 2$, the vector 
\[
\mu(\sK)=(\mu_0(\sK),\mu_1(\sK),\ldots,\mu_{m-1}(\sK))^t
\] 
we call the \emph{$\mu$-vector} of the complex $\sK$.
\end{definition}

By definition, the $\alpha$- and $\mu$-vectors of $\sK$ are related by a linear transformation 
\[
\mu=A\alpha,
\] 
where $A$ is the lower triangular binomial $m\times m$ matrix with entries 
\[
a_{k,j}=(-1)^{m-k-1}{m-j\choose k-j}, \ 0\leq j\leq k\leq m-1.
\] 
Observe that the linear transformation $A$ is an involution $A^2=E$, and therefore we also have:
\[
\alpha=A\mu.
\]

Using formula~\eqref{alphaExplicitFormula}, we obtain the following explicit formula for the $\mu$-numbers
\[
\mu_k(\sK)=(-1)^{m-k-1}\sum_{j=0}^{m-k}(-1)^jf_{j-1}{m-j\choose k}, \ 0\leq k\leq m-1.
\]

\subsection{$\alpha$- and $\mu$-vectors}

For a simplicial complex $\sK\neq\Delta_{[m]}$ on $[m]$ with $m\geq 2$, define 
\[
E_{\sK}(x_1,\ldots,x_m)=\sum_{\sigma\in \sK}\prod_{i\in\sigma}(e^{x_i}-1),
\]
\[
M_{\sK}(x_1,\ldots,x_m)=e^{x_1+\cdots+x_m}-\sum_{\sigma\in \sK}\prod_{i\notin\sigma}(e^{x_i}-1).
\]

\begin{lemma}
For any simplicial complex $\sK\neq\Delta_{[m]}$ on $[m]$ with $m\geq 2$, one has:
$$
M_{\sK}(x_1,\ldots,x_m)=E_{\sK^\vee}(x_1,\ldots,x_m).
$$
\end{lemma}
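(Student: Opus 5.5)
Write $y_i:=e^{x_i}-1$, so that $e^{x_i}=1+y_i$. The idea is to rewrite $M_{\sK}$ so that it becomes a sum over the complements of the non-faces of $\sK$, and then to recognize that collection as $\sK^\vee$.

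First, expand $e^{x_1+\cdots+x_m}=\prod_{i=1}^m(1+y_i)$ as a sum over all subsets:
$$
\prod_{i=1}^m(1+y_i)=\sum_{T\subseteq[m]}y_T,\qquad y_T:=\prod_{i\in T}y_i .
$$
In the second term of $M_{\sK}$, substitute $T=[m]\setminus\sigma$. Then
$$
\sum_{\sigma\in\sK}\prod_{i\notin\sigma}y_i=\sum_{T\colon [m]\setminus T\in\sK}y_T .
$$
Subtracting, we obtain
$$
M_{\sK}=\sum_{T\colon [m]\setminus T\notin\sK}y_T .
$$
This is a sum over subsets $T$ whose complement is a non-face of $\sK$.

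Next, identify this family of sets $T$ with $\sK^\vee$, after dropping primes. By definition, the maximal faces of $\sK^\vee$ are exactly the complements $I^c$ of minimal non-faces $I$ of $\sK$. So $T\in\sK^\vee$ if and only if $T\subseteq I^c$ for some minimal non-face $I$. This holds if and only if $T^c$ contains some minimal non-face $I$. Since every non-face contains a minimal non-face, and any superset of a non-face is a non-face, this is equivalent to $T^c\notin\sK$.

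Hence $\{T : T^c\notin\sK\}=\sK^\vee$, and the displayed sum equals $\sum_{\tau\in\sK^\vee}\prod_{i\in\tau}(e^{x_i}-1)=E_{\sK^\vee}$.

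The hypothesis $\sK\neq\Delta_{[m]}$ is needed to make this identification legitimate. It guarantees that $[m]\notin\sK$, so $\min(\sK)\neq\varnothing$ and $\sK^\vee$ is a nonempty complex containing $\varnothing$. This matches the term $T=\varnothing$, whose complement $[m]$ is a non-face. When $\sK$ has ghost vertices, the minimal non-faces include singletons, and the same argument still applies.

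The only step needing care is this identification of $\sK^\vee$ with the complements of non-faces, which relies on the closure of non-faces under supersets. The rest is formal expansion.
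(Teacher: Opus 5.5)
Your proof is correct and is essentially the paper's argument: expand $e^{x_1+\cdots+x_m}=\prod_i\bigl(1+(e^{x_i}-1)\bigr)$ as a sum over all subsets of $[m]$, and use that $\tau\in\sK^\vee$ exactly when $[m]\setminus\tau\notin\sK$. The only difference is cosmetic: the paper runs the computation for $\sK^\vee$, obtaining $M_{\sK^\vee}=E_{\sK}$ and tacitly using $(\sK^\vee)^\vee=\sK$, whereas you apply it directly to $\sK$ and explicitly justify the identification of $\sK^\vee$ with complements of non-faces, which the paper leaves unstated.
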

\begin{proof}
The following chain of identities holds:
$$
M_x(\sK^\vee)=e^x-\sum\limits_{\sigma\in \sK^\vee}\prod(\sigma^c)=e^x-\sum\limits_{\tau\notin \sK}\prod(\tau)=e^x-\sum\limits_{\tau\in 2^m}\prod(\tau)+\sum\limits_{\tau\in \sK}\prod(\tau)=\sum\limits_{\tau\in \sK}\prod(\tau)=E_x(\sK),
$$
where we used the notations
$$
e^x=\prod\limits_{i=1}^{m}e^{x_i},
E_x(\sK)=E_{\sK}(x_1,\ldots,x_m),M_x(\sK)=M_{\sK}(x_1,\ldots,x_m),
\prod(\sigma)=\prod\limits_{i\in\sigma}(e^{x_i}-1),
\sigma^c=[m]\setminus \sigma.
$$
This finishes the proof.
\end{proof}

Specialize at $x_1=\cdots=x_m=t$ to define: 
\[
E_{\sK}(t)=\sum_{i=0}^{m-1}f_{i-1}(e^t-1)^i,
\]
\[
M_{\sK}(t)=e^{mt}-\sum_{i=0}^{m-1}f_{i-1}(e^t-1)^{m-i}.
\]
The binomial expansions of the summands in $E_{\sK}(t)$ give \cite{JM}
\[
E_{\sK}(t)=\alpha_0+\alpha_1e^t+\cdots+\alpha_{m-1}e^{(m-1)t}.
\] 
Using the generating function for the Stirling numbers of the second kind, we obtain the following identities 
\[
\sum_{j=0}^{m-1}j^n\alpha_j=\sum_{j=0}^{m-1}j!f_{j-1}\genfrac{\{}{\}}{0pt}{}{n}{j}, \ \  n\geq0.
\]

The following two statements are immediately verified by direct computations.

\begin{proposition}\label{EandMfunctionsSumsToOneProp}
The function $M_{\sK}(t)$ is the exponential generating function for the $\mu$-coefficients
\[
M_{\sK}(t)=\mu_0+\mu_1e^t+\cdots+\mu_{m-1}e^{(m-1)t}.
\] 
Let $\frac{1}{e^t-1}=e^s-1$. Then the exponential generating functions for the $\alpha$- and $\mu$-numbers of a simplicial complex $\sK$ are related by 
\[
e^{-ms}E_{\sK}(s)+e^{-mt}M_{\sK}(t)=1.
\]
\end{proposition}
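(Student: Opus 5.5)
Both claims are checked by direct expansion in the variable $z=e^t$. For the first claim, write $M_{\sK}(t)=e^{mt}-\sum_{i=0}^{m-1}f_{i-1}(z-1)^{m-i}$ and expand each $(z-1)^{m-i}$ binomially. Since $\sK\neq\Delta_{[m]}$, we have $f_{m-1}=0$, so the $z^m$ coefficient of the sum is $f_{-1}=1$. It cancels against $e^{mt}=z^m$, leaving a polynomial of degree at most $m-1$ in $z$. For $0\le k\le m-1$, the coefficient of $z^k$ is
\[
-\sum_{i}f_{i-1}\binom{m-i}{k}(-1)^{m-i-k}=(-1)^{m-k-1}\sum_{j=0}^{m-k}(-1)^{j}f_{j-1}\binom{m-j}{k},
\]
after renaming $i=j$. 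This is exactly the explicit formula for $\mu_k(\sK)$ derived just before this subsection from~\eqref{alphaExplicitFormula} and $\mu=A\alpha$.

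As a cross-check, one could instead derive the first claim from the preceding Lemma, $M_{\sK}=E_{\sK^\vee}$, together with the identity $E_{\sK}(t)=\sum\alpha_k e^{kt}$, provided one also knows that $\mu(\sK)=\alpha(\sK^\vee)$. That route needs more, so I will use the direct $f$-vector computation above. The only care needed is the sign bookkeeping and the cancellation of the top term.

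For the second identity, set $w=e^t-1$, so that $e^s-1=1/w$ and $e^s=(1+w)/w$. Then
\[
E_{\sK}(s)=\sum_{i}f_{i-1}w^{-i},\qquad e^{-ms}=\frac{w^m}{(1+w)^m},
\]
so $e^{-ms}E_{\sK}(s)=\sum_i f_{i-1}w^{m-i}/(1+w)^m$. On the other side, $e^{-mt}=(1+w)^{-m}$, which gives
\[
e^{-mt}M_{\sK}(t)=1-\sum_i f_{i-1}w^{m-i}/(1+w)^m .
\]
Adding the two expressions gives $1$ identically, as formal rational functions of $w$ (equivalently, wherever $t$ and $s$ are defined).

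There is no real obstacle here. The step most prone to error is the sign and index matching in the first part, where I must verify that $(-1)^{m-i-k+1}=(-1)^{m-k-1}(-1)^{i}$ and that the summation range $j\le m-k$ comes from $\binom{m-j}{k}=0$ otherwise.
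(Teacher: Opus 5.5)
Your proposal is correct and is essentially the paper's argument. The paper only says both identities are ``immediately verified by direct computations,'' and your two computations are exactly that: extracting the coefficient of $z^k=e^{kt}$ and matching it against $\mu_k=(-1)^{m-k-1}\sum_{j}(-1)^jf_{j-1}\binom{m-j}{k}$, then substituting $w=e^t-1$ for the second identity. You also rightly avoid the circular route through $\mu(\sK)=\alpha(\sK^\vee)$, which the paper deduces afterwards from this proposition.

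One minor correction. The $z^m$ coefficient equals $f_{-1}=1$ simply because only the $i=0$ term has degree $m$. The hypothesis $f_{m-1}=0$ plays a different role: it guarantees that the specialized sum over $0\le i\le m-1$ accounts for all faces of $\sK$.
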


\begin{proposition}\label{AlphaAndMuAreDualProp}
For a simplicial complex $\sK\neq\Delta_{[m]}$ on $[m]$ with $m\geq 2$, it holds: 
\[
M_{\sK}(t)=E_{\sK^\vee}(t).
\] 
In particular, 
\[
\mu_i(\sK)=\alpha_i(\sK^\vee), \ i=0,1,\ldots,m.
\]
\end{proposition}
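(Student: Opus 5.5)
The identity $M_{\sK}(t)=E_{\sK^\vee}(t)$ comes for free from the preceding lemma: the multivariable identity $M_{\sK}(x_1,\ldots,x_m)=E_{\sK^\vee}(x_1,\ldots,x_m)$ holds for every $\sK\neq\Delta_{[m]}$, so setting $x_1=\cdots=x_m=t$ gives it. The one thing to check is that $\sK^\vee$ is a legitimate complex on $[m]$, i.e.\ $\sK^\vee\neq\Delta_{[m]}$. This follows from $\sK\neq\varnothing_{[m]}$ in the nonvoid case; the void case, where $\sK^\vee=\partial\Delta_{[m]}$, is covered as well. Since the lemma is stated for arbitrary $\sK\neq\Delta_{[m]}$, I use it as given. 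The lemma's proof uses $\max/\min$ duality in the form $\sigma\in\sK^\vee\iff\sigma^c\notin\sK$, and this is all that is needed.

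For the coefficient statement I compare expansions in powers of $e^t$. Proposition~\ref{EandMfunctionsSumsToOneProp} gives
\[
M_{\sK}(t)=\sum_{k=0}^{m-1}\mu_k(\sK)e^{kt}.
\]
The expansion from \cite{JM} applied to $\sK^\vee$ gives
\[
E_{\sK^\vee}(t)=\sum_{k=0}^{m-1}\alpha_k(\sK^\vee)e^{kt}.
\]
Both are polynomials in $e^t$ of degree at most $m-1$.

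I should make sure the top coefficient behaves correctly. In $M_{\sK}(t)$ the term $e^{mt}$ cancels against the contribution of $\sigma=\varnothing$ in the sum $\sum_{\sigma\in\sK}\prod_{i\notin\sigma}(e^{t}-1)$, whose leading term is also $e^{mt}$. This cancellation is already built into Proposition~\ref{EandMfunctionsSumsToOneProp}, which I take as given. Similarly, $E_{\sK^\vee}$ has degree at most $m-1$ because $[m]\notin\sK^\vee$. So all coefficients with index $i\le m-1$ agree. For index $i=m$ both sides are $0$ by convention.

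Since $e^{kt}$, $k=0,\ldots,m-1$, are linearly independent functions of $t$, equal polynomials in $e^t$ have equal coefficients. This yields $\mu_i(\sK)=\alpha_i(\sK^\vee)$ for all $i$.

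The only delicate step is the identification $M_{\sK}(t)=\sum\mu_ke^{kt}$. If I did not want to lean on Proposition~\ref{EandMfunctionsSumsToOneProp}, I would verify it directly. I would expand
\[
(e^t-1)^{m-i}=\sum_{k}\binom{m-i}{k}(-1)^{m-i-k}e^{kt},
\]
and compare the result with the explicit formula for $\mu_k$ in terms of the $f$-vector given earlier.
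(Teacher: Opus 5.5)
Your proposal is correct and follows the paper's route: specialize the preceding lemma at $x_1=\cdots=x_m=t$, then compare coefficients of $e^{kt}$ using Proposition~\ref{EandMfunctionsSumsToOneProp} and the expansion of $E_{\sK^\vee}(t)$ from \cite{JM}. The one muddled point is your justification that $\sK^\vee\neq\Delta_{[m]}$. It holds simply because $\varnothing\in\sK$ for every $\sK$, so $[m]=\varnothing^c\notin\sK^\vee$, whether or not $\sK$ is the void complex.
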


\subsection{Unitary bordism classes of canonical manifolds}
We are going to determine the complex bordism class of a canonical toric manifold $X_\sK$ in terms of the face vector of $\sK$. Recall that two stably complex manifolds, $M_1$ and $M_2$, are bordant in the unitary bordism ring $\Omega^U_*$ if and only if they have identical
Chern characteristic numbers. The identity of Chern numbers $c_I[X_\sK]=c_I[X_{\sK^\vee}], \ I\vdash m-1$ implies the following
\[
\sum_{k=0}^{m-1}\alpha_k(\sK)\gamma_{I,k}=\sum_{k=0}^{m-1}\mu_k(\sK)\gamma_{I,k},\] which shows that the converse of Proposition \ref{complexcobordism} does not hold. 

The combinatorial nature of the coefficients of the transformation matrix $A$ between $\alpha$- and $\mu$-numbers is given by the following collection of simplicial complexes. Let 
\begin{equation}\label{complexes}
    \sK_{j}:=\Delta_{[j]}\ast\partial\Delta_{[m-j]}, \ \ j=0,1,\ldots,m-1,
\end{equation} 
then
\[
A=\begin{bmatrix}
    \alpha_0(\sK_0) & 0 & \cdots & 0\\
    \alpha_1(\sK_0) & \alpha_1(\sK_1) & \cdots & 0\\
    \vdots & \vdots & \ddots & \vdots\\
    \alpha_{m-1}(\sK_0)& \alpha_{m-1}(\sK_1)& \cdots & \alpha_{m-1}(\sK_{m-1})
\end{bmatrix}.
\]

Denote by 
$$
X_j:=X_{\sK_j}, \ j=0,1,\ldots,m-1
$$ 
the canonical toric manifolds associated with simplicial complexes in the collection $(\ref{complexes})$.

\begin{proposition}\label{XbasisProp}
The canonical toric manifold $X_j$ is diffeomorphic to the product of projective spaces
\[
X_j=(\mathbb{C}P^1)^j\times\mathbb{C}P^{m-j-1} \text{ for all } j=0,1,\ldots,m-1.
\]
\end{proposition}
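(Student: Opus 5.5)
The plan is to identify the Bier sphere of $\sK_j=\Delta_{[j]}\ast\partial\Delta_{[m-j]}$ and its canonical fan explicitly. Then we recognize $\Sigma_{\sK_j}$ as a fan whose toric variety is a (generalized) Bott tower: a $\C P^{m-j-1}$-bundle over $(\C P^1)^j$, or an iterated $\C P^1$-bundle over $\C P^{m-j-1}$. Finally we show this bundle is topologically trivial.

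\textbf{Step 1 (combinatorics).} The minimal non-faces of $\sK_j$ consist of the single set $\{j+1,\ldots,m\}$. The complement of this set is $[j]$, so $\sK_j^\vee$ is the simplex on $\{1',\ldots,j'\}$. A face of $\Bier(\sK_j)$ is $I\sqcup J'$ with $I\in\sK_j$, $J\subset[j]$ and $I\cap J=\varnothing$. Hence for each $i\le j$ the vertices $i$ and $i'$ form a missing edge, and the remaining vertices $j+1,\ldots,m$ span $\partial\Delta_{[m-j]}$. Therefore
\[
\Bier(\sK_j)\cong \partial\diamondsuit^{j}\ast\partial\Delta_{[m-j]},
\]
which is the boundary of the dual of $\Delta^1\times\cdots\times\Delta^1\times\Delta^{m-j-1}$. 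The primed vertices $j'+1,\ldots,m'$ are ghosts.

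\textbf{Step 2 (the fan).} The rays are $-e_i$ and $e_i$ for $i\le j$, together with $-e_{j+1},\ldots,-e_m$, where $e_m=-(e_1+\cdots+e_{m-1})$. The vectors $-e_{j+1},\ldots,-e_{m-1}$ and $e_1+\cdots+e_{m-1}$ satisfy the linear relation
\[
-e_{j+1}-\cdots-e_{m-1}+(e_1+\cdots+e_{m-1})=e_1+\cdots+e_j .
\]
So the fan is not a product fan, and $X_j$ is a projective bundle
\[
\mathbb P\bigl(\mathcal O^{\oplus(m-j-1)}\oplus L\bigr)\to(\C P^1)^j,
\qquad L=\mathcal O(1,\ldots,1)^{\pm1}.
\]
The base comes from the sub-fan spanned by $\pm e_1,\ldots,\pm e_j$ via the projection that kills $e_{j+1},\ldots,e_{m-1}$. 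Equivalently, one can read $X_j$ off the cohomology presentation: the relations $x_i x_i'=0$ and $x_i'-x_i=u$ give $x_i^2=-ux_i$, and the ideal generated by $\prod_{k>j}x_k$ fixes the fibre.

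\textbf{Step 3 (triviality).} This is the main obstacle, because the complex structure is genuinely twisted. We need $\mathbb P(\mathcal O^{n}\oplus L)$ to be diffeomorphic to $(\C P^1)^j\times\C P^{n}$ with $n=m-j-1$.

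The first thing to try is a direct argument. The projectivization depends only on the bundle up to tensoring with a line bundle, and up to isomorphism of the underlying complex vector bundle. Over $(\C P^1)^j$, which has cells only in even dimensions $\le 2j$, a complex bundle of rank $n+1$ is determined stably by its Chern classes. Tensoring with $L^{-1/(n+1)}$ is impossible in general, so instead compare $\mathcal O^n\oplus L$ with a bundle of the form $M^{\oplus(n+1)}$. We look for a line bundle $M$ with $\mathbb P(M^{\oplus(n+1)})\cong(\C P^1)^j\times\C P^n$, matching total Chern classes modulo twisting.

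A more robust alternative is to do the case $j=1$ first and then induct on $j$. For $j=1$, $X_1=\mathbb P(\mathcal O^{n}\oplus\mathcal O(\pm1))$ over $\C P^1$, and the diffeomorphism type of a $\C P^n$-bundle over $S^2$ is governed by $\pi_1(PU(n+1))=\Z/(n+1)$ through the first Chern class. Here, however, $c_1=\pm1$ is not divisible by $n+1$. So I expect the intended proof uses the real structure instead. The underlying real bundle of $\mathcal O^n\oplus L$ is stably trivial when $c_1(L)$ is even, or we compare via the $U(n+1)$- versus $SO(2n+2)$-clutching in $\pi_1$. Note that $\pi_1(SO(2n+2))=\Z/2$ kills the relevant class after also using the orientation-reversing conjugation on fibres, which is allowed for a diffeomorphism.

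In practice, I would first try to produce an explicit diffeomorphism using the conjugation on some coordinates: replace $e_i\mapsto -e_i$ on selected summands, which is how a twisted Bott tower becomes diffeomorphic to a product. I would verify this by checking that the integral cohomology rings and Pontryagin classes agree. Here $H^*(X_j)$ has generators $x_1,\ldots,x_j,u$ with $x_i^2=-ux_i$, and setting $y_i=2x_i+u$ should diagonalize the ring up to the needed change. Any remaining ambiguity would then be settled by the classification of $\C P^n$-bundles over products of $2$-spheres.
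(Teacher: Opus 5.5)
Your Step 3 is a genuine gap, and it cannot be filled. For $1\le j\le m-2$ the manifold $X_j$ is not diffeomorphic, and not even homotopy equivalent, to $(\C P^1)^j\times\C P^{m-j-1}$. So the obstruction you ran into ($c_1=\pm1$ not divisible by $n+1$) is real, and no conjugation or real-bundle trick removes it.

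Your own presentation shows this. The ring $H^*(X_j;\Z_2)=\Z_2[x_1,\ldots,x_j,u]/(u^{m-j},\,x_i^2+ux_i)$ is free over $\Z_2[u]/(u^{m-j})$ on the square-free monomials in the $x_i$, and $(\sum p_ix_i+qu)^2=\sum p_iux_i+qu^2$. Hence the classes in $H^2(X_j;\Z_2)$ with zero square form a subspace of dimension at most $1$, and of dimension $0$ if $m-j\ge 3$. In the product this subspace has dimension at least $j$, and exactly $j+1$ if $m-j=2$.

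Concretely, for $m=3$, $j=1$ the fan has rays $e_1,\,e_1+e_2,\,-e_1,\,-e_2$. So $X_1$ is the Hirzebruch surface $F_1\cong\C P^2\#\overline{\C P^2}$, whose intersection form is odd, unlike that of $S^2\times S^2$. For general $m$, set $v_1=e_1+\cdots+e_{m-1}$ and $v_k=-e_k$ for $2\le k\le m-1$. This exhibits $\Sigma_{\sK_1}$ as the star subdivision of the fan of $\C P^{m-1}$ at $\langle v_1,\ldots,v_{m-1}\rangle$, with new ray $v_1+\cdots+v_{m-1}=e_1$. Thus $X_1\cong\mathrm{Bl}_{pt}\C P^{m-1}$, which matches the paper's value $c_1^3[X_1]=56=c_1^3[\mathrm{Bl}_{pt}\C P^3]$ for $m=4$.

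Your substitution $y_i=2x_i+u$ is where the problem shows up. It gives $y_i^2=u^2$, which vanishes only when $m-j=2$. Even then, $\{y_1,\ldots,y_j,u\}$ is a basis only over $\Z[\tfrac12]$.

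Step 2 is also wrong as stated. The projection killing $e_{j+1},\ldots,e_{m-1}$ is not a map of fans: it sends the cone $\langle -e_1,\,e_1+\cdots+e_{m-1}\rangle$ of $\Sigma_{\sK_j}$ to a cone containing $-e_1$ and $e_1+\cdots+e_j$, and these lie in no common cone of the cross-polytope fan. The toric fibration actually present is the projection along $e_1,\ldots,e_j$. It exhibits $X_j$ as a $(\C P^1)^j$-bundle over $\C P^{m-j-1}$, namely the fibre product over $\C P^{m-j-1}$ of $j$ copies of $\mathbb P(\mathcal O\oplus\mathcal O(1))=\mathrm{Bl}_{pt}\C P^{m-j}$.

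The paper's proof is the one-line claim that $\Bier(C\sK)=\Sigma\Bier(\sK)$ implies $X_{C\sK}=\C P^1\times X_\sK$, iterated over $\sK_j=C^j\partial\Delta_{[m-j]}$. That implication fails for exactly the reason you noted at the start of Step 2: the ray $-e_m=e_1+\cdots+e_{m-1}$ has components along the cone directions. So a suspension only produces a twisted $\C P^1$-bundle over $X_\sK$, for example $\mathrm{Bl}_{pt}\C P^{m-1}\to\C P^{m-2}$ when $\sK=\partial\Delta_{[m-1]}$.

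What survives is your Step 1: $\Bier(\sK_j)\cong\partial\diamondsuit^j\ast\partial\Delta_{[m-j]}$. Hence $X_j$ has the same Betti numbers, and so the same $\chi_y$-genus, as $(\C P^1)^j\times\C P^{m-j-1}$. That is what the paper's $\chi_y$ computation actually uses.
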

\begin{proof} The Bier sphere of the cone $C\sK$ of a simplicial complex $\sK$ is the suspension $\mathrm{Bier}(C\sK)=\Sigma\mathrm{Bier}(\sK)$, which implies 
\[
X_{C\sK}=\mathbb{C}P^1\times X_\sK.
\] 
It remains to recall that $\sK_j=C^j\partial\Delta_{[m-j]}$ and $X_{\partial\Delta_{[m-j]}}=\mathbb{C}P^{m-j-1}$.
\end{proof}

The following example shows that the almost complex structure of canonical manifolds $X_j, \ j=0,1,\ldots,m-1$ is not the standard product structure of projective spaces.
\begin{example}Take $m=4$ and $X_1$ with $\sK_1=C\partial\Delta_{[3]}$, which is diffeomorphic to the product $\mathbb{C}P^1\times\mathbb{C}P^2.$ We have the following
\[c(\mathbb{C}P^1\times\mathbb{C}P^2)=(1+2x)(1+3y+3y^2),\] 
\[c_1^3[\mathbb{C}P^1\times\mathbb{C}P^2]=(2x+3y)^3[X_1]=54xy^2[X_1]=54.\] On the other hand,
\[
c_1(X_1)=\sigma_1+\sigma'_1=2\sigma_1+4u,
\] 
which gives, using $\sigma_1^2=2\sigma_2-u\sigma_1, \sigma_1\sigma_2=3\sigma_3-2\sigma_2u$, the following expression for the Chern class:
\[
c_1(X_1)^3=64u^3+56\sigma_1u^2+48\sigma_2u+48\sigma_3.
\] 
Therefore, $\gamma_{111}=(64,56,48,48)^t$. The matrix $A^t$ is given by $A^t=
\begin{bmatrix}
    -1 & 4 & -6 & 4\\
     0 & 1 & -3 & 3\\
     0 & 0 & -1 & 2\\
     0 & 0 & 0 & 1
\end{bmatrix}$. We see that $A^t\gamma_{111}=\gamma_{111}$. The same result is obtained in the dual settings $\sK_1^\vee=\Delta[1]$ with $c_1(X_1)^3=64v^3+56\sigma_1'v^2+48\sigma_2'v+48\sigma_3'$. Our calculations show that the manifolds $X_1$ and $\mathbb{C}P^1\times\mathbb{C}P^2$ have different almost complex structure since 
\[
c_1^3[\mathbb{C}P^1\times\mathbb{C}P^2]=54\neq 56=\gamma_{111,1}=c_1^3[X_1].
\]
\end{example}

The next proposition provides a geometric interpretation of the coefficients $\gamma_{I,j}$.

\begin{proposition}\label{gammaoverc}
The coefficients $\gamma_{I,j}$ are determined by 
\[
\gamma_{I,j}=(-1)^{m-1}\sum_{k=j}^{m-1}(-1)^k{m-j \choose k-j}c_I[X_k], \ \ I\vdash m-1, j=0,1,\ldots,m-1.
\]
\end{proposition}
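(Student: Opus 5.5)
Apply formula \eqref{alpha} to the complexes $\sK_k$ of \eqref{complexes} and invert the resulting linear system. Since $\sK_k$ is a simplicial complex on $[m]$ different from the simplex, \eqref{alpha} gives
\[
c_I[X_k]=\sum_{j=0}^{m-1}\alpha_j(\sK_k)\gamma_{I,j},\qquad k=0,1,\ldots,m-1 .
\]
The matrix with entries $\alpha_j(\sK_k)$, row index $j$ and column index $k$, is exactly the matrix $A$ displayed just before the statement. So in vector form $(c_I[X_0],\ldots,c_I[X_{m-1}])^t=A^t\gamma_I$.

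The first task is to justify the displayed identity $A=(\alpha_j(\sK_k))$, since the paper only asserts it. I would compute the $f$-vector of $\sK_k=\Delta_{[k]}\ast\partial\Delta_{[m-k]}$. Its face polynomial is
\[
\sum_p f_{p-1}(\sK_k)\,y^p=(1+y)^k\bigl((1+y)^{m-k}-y^{m-k}\bigr)=(1+y)^m-y^{m-k}(1+y)^k .
\]
I then substitute $y=t-1$ into the identity $\sum_p f_{p-1}(t-1)^p=\sum_j\alpha_jt^j$. The terms of degree $\ge m$ must be dropped, because only $p\le m-1$ occurs. This gives
\[
\sum_j\alpha_j(\sK_k)t^j=t^m-(t-1)^{m-k}t^k \quad\text{truncated to degree }\le m-1 .
\]
The $t^m$ terms cancel, so
\[
\sum_j\alpha_j(\sK_k)t^j=-\sum_{i\ge1}\binom{m-k}{i}(-1)^{i}t^{m-i}.
\]
Setting $j=m-i$, the coefficient of $t^j$ is $(-1)^{m-j+1}\binom{m-k}{m-j}=(-1)^{m-j-1}\binom{m-k}{j-k}$. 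This equals $a_{j,k}$, so the identity holds.

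Next I would invert the system using the involution property $A^2=E$ stated earlier. It gives $(A^t)^{-1}=A^t$, hence $\gamma_I=A^t\,(c_I[X_k])_k$. Written out,
\[
\gamma_{I,j}=\sum_{k}a_{k,j}\,c_I[X_k]=\sum_{k=j}^{m-1}(-1)^{m-k-1}\binom{m-j}{k-j}c_I[X_k].
\]
Since $(-1)^{m-k-1}=(-1)^{m-1}(-1)^{k}$, this is exactly the claimed formula.

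The main obstacle is the involution $A^2=E$, if one does not want to take it on faith. Checking it amounts to the binomial identity
\[
\sum_k(-1)^{j}\binom{m-i}{k-i}\binom{m-k}{j-k}=\delta_{ij}\quad(\text{up to sign}),
\]
which follows from $\binom{m-i}{k-i}\binom{m-k}{j-k}=\binom{m-i}{j-i}\binom{j-i}{k-i}$ together with the alternating-sum vanishing. I would also double-check the index orientation, since it is easy to transpose $A$ by mistake. A useful sanity test is the $m=4$ example, where $A^t\gamma_{111}=\gamma_{111}$ and $c_1^3[X_1]=56$.
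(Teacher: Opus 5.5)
Your proposal is correct and matches the paper's proof: applying \eqref{alpha} to the complexes $\sK_k$ gives $c_I=A^t\gamma_I$, and the involution $A^2=E$ then yields $\gamma_I=A^tc_I$. Your face-polynomial computation $\alpha_j(\sK_k)=(-1)^{m-j-1}\binom{m-k}{j-k}$ correctly proves the matrix identification that the paper only asserts (one small slip: in your sketch of $A^2=E$ the sign must be $(-1)^{k}$, varying with the summation index, not $(-1)^{j}$, since otherwise the sum is $\pm 2^{j-i}\binom{m-i}{j-i}$ rather than $\delta_{ij}$).
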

\begin{proof}Define the vectors $c_I=(c_I(X_0),c_I(X_1),\ldots,c_I(X_{m-1}))^t, \ I\vdash m-1$. It follows from equations $(\ref{alpha})$ that $c_I=A^t\gamma_I$. Since $A^t$ is an involution, we obtain $\gamma_I=A^tc_I$. 
\end{proof}

Applying the duality $c_I[X_j]=c_I[X_j^\vee], \ I\vdash m-1$ for the simplicial complexes $\sK_j, \ j=0,1,\ldots,m-1$, we immediately obtain the following relations between the $\gamma$-coefficients
\begin{equation}\label{gammadependence}
\gamma_{I,j}=\sum_{k=j}^{m-1}(-1)^{m-k-1}{m-j\choose k-j}\gamma_{I,k}, \ j=0,1,\ldots,m-1,
\end{equation} 
since $\alpha_k(\sK_j^\vee)=\delta_{k,j}$. It follows that the vectors $\gamma_I$ are the eigenvectors of the matrix $A^t$: 
\[
A^t\gamma_I=\gamma_I, \ I\vdash m-1.
\] 
This recovers the topological nature of the $\gamma$-coefficients introduced in the next result. 

\begin{proposition}\label{topologicalgamma}
The $\gamma$-coefficients are the Chern numbers of the canonical toric manifolds $X_j$:
\[
\gamma_{I,j}=c_I[X_j], \ j=0,1,\ldots,m-1, \ I\vdash m-1.
\]
\end{proposition}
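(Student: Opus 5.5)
The plan is to combine formula~\eqref{alpha}, which says $c_I[X_\sK]=\langle\alpha(\sK),\gamma_I\rangle$, with the $\alpha$-vectors of the dual complexes $\sK_j^\vee$. The key point is that the manifolds $X_j$ and $X_{\sK_j^\vee}$ are biholomorphic via $\varphi$, because $\Sigma_{\sK^\vee}=-\Sigma_\sK$. The complex structures therefore agree and the Chern numbers coincide:
\[
c_I[X_j]=c_I[X_{\sK_j^\vee}], \qquad I\vdash m-1 .
\]

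The first step is to compute $\alpha(\sK_j^\vee)$. By Proposition~\ref{AlphaAndMuAreDualProp}, $\alpha(\sK_j^\vee)=\mu(\sK_j)=A\,\alpha(\sK_j)$. The matrix display above identifies the $j$-th column of $A$ with $\alpha(\sK_j)$, so $A\,\alpha(\sK_j)=A\,(A e_j)=e_j$, using that $A$ is an involution. Hence $\alpha_k(\sK_j^\vee)=\delta_{k,j}$, which is the fact already quoted just before \eqref{gammadependence}.

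Next, apply \eqref{alpha} to the complex $\sK_j^\vee$:
\[
c_I[X_{\sK_j^\vee}]=\sum_{k=0}^{m-1}\alpha_k(\sK_j^\vee)\gamma_{I,k}=\gamma_{I,j}.
\]
Combining this with the biholomorphism gives $\gamma_{I,j}=c_I[X_j]$.

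An alternative route avoids the dual complexes. From \eqref{alpha} one has $c_I[X_j]=(A^t\gamma_I)_j$, since $\alpha(\sK_j)$ is the $j$-th column of $A$. The eigenvector relation $A^t\gamma_I=\gamma_I$ derived from \eqref{gammadependence} then gives the claim directly.

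The main obstacle is justifying the identification of the columns of $A$ with $\alpha(\sK_j)$, which the paper states without proof. I would check it directly with formula~\eqref{alphaExplicitFormula}, or more cleanly with the generating-function identity $\sum_p f_{p-1}(t-1)^p=\sum_k\alpha_k t^k$.

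For $\sK_j=\Delta_{[j]}\ast\partial\Delta_{[m-j]}$, every subset of $[m]$ is a face except those containing all of $[m-j]$. The face generating function is therefore
\[
\sum_p f_{p-1}y^p=(1+y)^j\bigl((1+y)^{m-j}-y^{m-j}\bigr).
\]
Substituting $y=t-1$ gives
\[
t^m-t^j(t-1)^{m-j}.
\]
The $t^m$ term drops out in degrees $\le m-1$, since $\sK_j\neq\Delta_{[m]}$ and the face $[m]$ is excluded. What remains has coefficient of $t^k$ equal to
\[
(-1)^{m-k-1}\binom{m-j}{k-j}=a_{k,j},
\]
as required.
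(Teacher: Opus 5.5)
Your proof is correct and follows the paper's own reasoning. The duality $c_I[X_j]=c_I[X_{\sK_j^\vee}]$, combined with $\alpha(\sK_j^\vee)=e_j$ and \eqref{alpha}, gives $\gamma_{I,j}=c_I[X_j]$; equivalently, the eigenvector relation $A^t\gamma_I=\gamma_I$ together with $c_I=A^t\gamma_I$ gives the same thing. Your generating-function check that the columns of $A$ are the vectors $\alpha(\sK_j)$ fills in a step the paper leaves unstated. Two small remarks on it: the two $t^m$ terms cancel exactly rather than being discarded, and you could get $\alpha(\sK_j^\vee)=e_j$ directly, since $\sK_j^\vee=\Delta_{[j]}$ has $E_{\sK_j^\vee}(t)=e^{jt}$.
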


Propositions~\ref{gammaoverc} and \ref{topologicalgamma} yield the following relations in the unitary bordism ring.

\begin{proposition}\label{RelationsXJmanifoldsCoro}
The space of relations between the classes $[X_p], 0\leq p\leq m-1$ in $\Omega^U_{2(m-1)}$ is generated by  \[
[X_j]=\sum_{k=j}^{m-1}(-1)^{m-k-1}{m-j\choose k-j}[X_k], \ j=0,1,\ldots,m-1. 
\]  
\end{proposition}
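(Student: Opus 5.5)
The plan is to identify the relation module with the kernel of the homomorphism
\[
\Phi\colon \Z^m\to\Omega^U_{2(m-1)},\qquad (a_0,\ldots,a_{m-1})\mapsto \sum_{p=0}^{m-1}a_p[X_p].
\]
Since unitary bordism classes are detected by Chern numbers, $a\in\ker\Phi$ if and only if $\sum_p a_p c_I[X_p]=0$ for all $I\vdash m-1$. By Proposition~\ref{topologicalgamma} this condition reads $\langle a,\gamma_I\rangle=0$ for all $I$. So $\ker\Phi$ is the annihilator of the span $\Gamma:=\langle\gamma_I\mid I\vdash m-1\rangle$.

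First I check that the listed relations lie in $\ker\Phi$. The $j$-th relation is the vector $r_j=e_j-Ae_j$, the $j$-th column of $E-A$, because $a_{k,j}=(-1)^{m-k-1}\binom{m-j}{k-j}$. Its pairing with $\gamma_I$ is $\langle (E-A)e_j,\gamma_I\rangle=\langle e_j,(E-A^t)\gamma_I\rangle=0$, using the eigenvector identity $A^t\gamma_I=\gamma_I$ derived from \eqref{gammadependence}. Equivalently, one plugs $\gamma_{I,j}=c_I[X_j]$ into \eqref{gammadependence} and gets equality of all Chern numbers of the two sides. Hence $\operatorname{im}(E-A)\subseteq\ker\Phi$.

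For the converse, I argue rationally. We have $\operatorname{im}(E-A)^\perp=\ker(E-A^t)$, the $(+1)$-eigenspace $V_+$ of $A^t$, and $\Gamma\subseteq V_+$. Equality $\ker\Phi\otimes\Q=\operatorname{im}(E-A)\otimes\Q$ is therefore equivalent to $\dim_\Q\Gamma=\dim V_+$. Because $A$ is an involution, $\dim V_+=(m+\operatorname{tr}A)/2$. The diagonal entries are $a_{k,k}=(-1)^{m-k-1}$, so $\operatorname{tr}A\in\{0,1\}$ and $\dim V_+=\lfloor\frac{m+1}{2}\rfloor$.

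The main obstacle is the lower bound $\dim\Gamma\ge\lfloor\frac{m+1}{2}\rfloor$, i.e.\ that the classes $[X_p]$ span a subgroup of that rank. My first attempt is to use a specific genus instead of all Chern numbers: the Hirzebruch $\chi_y$-genus is a $\Z$-linear combination of Chern numbers. For a toric manifold it is $\chi_y[X_\sK]=\sum_i h_i(\Bier(\sK))(-y)^i$. By Proposition~\ref{XbasisProp}, $\chi_y[X_p]=(1-y)^p(1+(-y)+\cdots+(-y)^{m-p-1})$. It then suffices to show that these $m$ polynomials span a space of dimension $\lfloor\frac{m+1}{2}\rfloor$, namely the palindromic polynomials of degree $m-1$. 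For this I pair $p$ with $m-1-p$ and look at the lowest-order behaviour at $y=1$: the polynomials $(1-y)^p(\ldots)$ for $p\le\lfloor\frac{m-1}{2}\rfloor$ have distinct orders of vanishing at $y=1$, since $1+(-y)+\cdots$ does not vanish at $y=1$ when $m-p$ is odd. If not, I would adjust the selection by parity. This gives linear independence of $\lfloor\frac{m+1}{2}\rfloor$ of the $\chi_y$-genera, hence of the corresponding $\gamma$'s restricted to these Chern combinations.

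Finally, for the statement over $\Z$, I would note that $E-A$ is lower triangular with diagonal entries $1-(-1)^{m-k-1}\in\{0,2\}$. The integral relations $r_j$ with $m-j-1$ odd let us eliminate the corresponding $[X_j]$ directly. If saturation of $\operatorname{im}(E-A)$ inside $\ker\Phi$ fails, then ``generated'' is to be read rationally, which is what the rank count above establishes.
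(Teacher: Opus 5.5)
Your first half is the paper's argument. The listed relations are the columns $r_j=(E-A)e_j$. They pair to zero with every $\gamma_I$ because $A^t\gamma_I=\gamma_I$ by \eqref{gammadependence} and $\gamma_{I,j}=c_I[X_j]$ by Proposition~\ref{topologicalgamma}; this is all the paper writes for this proposition.

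For generation, the paper has only the count $\dim\ker(A^t-E)=\lfloor\frac{m+1}{2}\rfloor$. It adds the assertion, in the proof of Theorem~\ref{eqcomplexcobordism}, that the $\gamma_I$ span $E_+(A^t)$, but \eqref{gammadependence} gives only $\Gamma\subseteq E_+(A^t)$. Your $\chi_y$-genus argument supplies the missing inequality $\dim\Gamma\ge\lfloor\frac{m+1}{2}\rfloor$, so your route genuinely completes the paper's. The paper's version is short because $A$ is an involution. Yours actually certifies that no rational relation lies outside $\mathrm{im}(E-A)$, and once corrected it also recovers the basis of Theorem~\ref{additive}.

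Two things need fixing.

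First, your explicit selection is wrong. $\chi_y[X_p]=(1-y)^p\frac{1-(-y)^{m-p}}{1+y}$ vanishes at $y=1$ to order exactly $p$ when $m-p$ is odd, and exactly $p+1$ when $m-p$ is even. So the family $p\le\lfloor\frac{m-1}{2}\rfloor$ has repeated orders: for even $m\ge 4$, both $p=0$ and $p=1$ give order $1$. The right choice is $p=m-1-2k$ with $0\le k\le\lfloor\frac{m-1}{2}\rfloor$. Their orders $m-1-2k$ are distinct, and these are exactly the paper's basis elements $[X_{m-1-2k}]$. A harmless side remark: the coefficients of $\chi_y$ are $\Q$-linear, not $\Z$-linear, combinations of Chern numbers.

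Second, the relations $r_j$ with $m-j-1$ odd carry the coefficient $2$ on $[X_j]$, so they do not eliminate $[X_j]$ over $\Z$. In fact $\mathrm{im}_{\Z}(E-A)$ is not saturated. For $m=4$ it equals $\langle 2(e_0-2e_1+e_2),\,e_2-e_3\rangle$, which does not contain the paper's own relation $[X_0]=2[X_1]-[X_2]$. That relation does lie in $\ker\Phi$, since $\Omega^U_*$ is torsion free. So the proposition holds only over $\Q$ (equivalently, after saturation), which is exactly what your rank count proves.
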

\begin{example}
Let $m=4$. Then the relations between $[X_j]$ for $0\leq j\leq 3$ can be written as follows:
$$
[X_0]=2[X_1]-[X_2]\text{ and }[X_2]=[X_3].
$$
Take $\sK=\langle\{1,2,3\},\{4\}\rangle$. Then $\alpha(\sK)=(-1,1,0,1)$ and $\mu(\sK)=(1,-3,3,0)$, and we obtain: 
$$
[X_\sK]=-[X_0]+[X_1]+[X_3]=[X_0]-3[X_1]+3[X_2].
$$
\end{example}

The Proposition \ref{topologicalgamma} and the formula $(\ref{alpha})$ show that any complex bordism class $[X_\sK]$ lies in the integer span of complex bordism classes $[X_k], \ k=0,1,\ldots,m-1$. More precisely, the following statement holds.

\begin{theorem}\label{additive}
The next identity takes place in the unitary bordism group $\Omega^U_{2m-2}\colon$
\[
[X_{\sK}]=\sum_{j=0}^{m-1}\alpha_j(\sK)[X_{j}].
\] 
The rank of the free abelian subgroup in $\Omega^{U}_{2m-2}$ generated by all the unitary bordism classes of $(2m-2)$-dimensional canonical toric manifolds is equal to $\lfloor\frac{m+1}{2}\rfloor$. The set $\{[X_{m-1-2k}]\mid 0\leq k\leq \lfloor\frac{m-1}{2}\rfloor\}$ serves as a basis for that subgroup.
\end{theorem}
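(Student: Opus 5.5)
The identity comes from comparing Chern numbers. Fix a partition $I\vdash m-1$. By formula~(\ref{alpha}) we have $c_I[X_\sK]=\sum_{k}\alpha_k(\sK)\gamma_{I,k}$. By Proposition~\ref{topologicalgamma}, $\gamma_{I,k}=c_I[X_k]$. Hence
\[
c_I[X_\sK]=c_I\Big[\sum_{k=0}^{m-1}\alpha_k(\sK)X_k\Big]
\]
for every $I\vdash m-1$. Two classes in $\Omega^U_{2m-2}$ with identical Chern numbers coincide, since $\Omega^U_*$ is torsion free and is detected by Chern numbers. This proves $[X_\sK]=\sum_j\alpha_j(\sK)[X_j]$. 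Consequently the subgroup $G\subset\Omega^U_{2m-2}$ generated by all canonical toric manifolds equals the span of $[X_0],\ldots,[X_{m-1}]$; note that each $X_j$ is itself canonical.

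For the rank I would use the Chern-number map $\Omega^U_{2m-2}\to\Z^{\pi(m-1)}$, which is injective. It sends the span of the $[X_j]$ onto the row space of the matrix $C=(c_I[X_j])_{j,I}=(\gamma_{I,j})$. The rank of $G$ is therefore the dimension of the span of the columns $\gamma_I$. By~(\ref{gammadependence}) each $\gamma_I$ lies in the $(+1)$-eigenspace of the involution $A^t$. Because $A^t$ is an involution, it is diagonalizable over $\Q$. Its trace is $\sum_k(-1)^{m-k-1}=1$ if $m$ is odd and $0$ if $m$ is even. So the $(+1)$-eigenspace has dimension $(m+\operatorname{tr})/2=\lfloor\frac{m+1}{2}\rfloor$, which is an upper bound for the rank. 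The same bound can be read off Proposition~\ref{RelationsXJmanifoldsCoro}. Since $A$ is lower triangular with diagonal entries $(-1)^{m-j-1}$, the relation indexed by $j$ with $m-1-j$ odd reads $2[X_j]=\sum_{k>j}(\cdots)[X_k]$. Over $\Q$ this expresses $[X_j]$ through classes of higher index. Downward induction then shows that the classes $[X_{m-1-2k}]$ span $G\otimes\Q$.

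The main obstacle is the matching lower bound: the $\lfloor\frac{m+1}{2}\rfloor$ classes $[X_{m-1-2k}]$ must be linearly independent, equivalently the $\gamma_I$ must fill the whole $(+1)$-eigenspace. I would prove this with the Todd genus or the $\chi_y$-genus rather than raw Chern numbers. Each $X_j$ is a product $(\C P^1)^j\times\C P^{m-1-j}$ up to bordism twisting, and $\chi_y$ is multiplicative; alternatively $\chi_y(X_\sK)$ is given by the $h$-polynomial of $\Bier(\sK)$. I would compute $\chi_y[X_j]=\sum_i h_i(\Bier(\sK_j))(-y)^i$ explicitly. Since $\Bier(\sK_j)$ is a $j$-fold suspension of $\partial\Delta^{m-1-j}$, its $h$-polynomial is $(1+t)^j(1+t+\cdots+t^{m-1-j})$. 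These are palindromic polynomials of degree $m-1$, and they live in a space of dimension $\lfloor\frac{m+1}{2}\rfloor$. I would check that the polynomials with $j=m-1-2k$ are linearly independent there. This follows by a triangularity argument: divide by the appropriate power of $(1+t)$, or use the basis $t^i(1+t)^{m-1-2i}$ of palindromic polynomials. Since $\chi_y$ is a linear combination of Chern numbers, independence of the $\chi_y$ values forces independence in $\Omega^U$. Combined with the upper bound, this gives the rank and the basis. If integral generation (rather than rational) is demanded, I would additionally verify that the coefficient $2$ in the relations is avoided. This should come from $[X_{m-2-2k}]$ being expressible integrally via $\alpha$-vectors of suitable complexes, which I expect requires the explicit binomial structure of $A$.
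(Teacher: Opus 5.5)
Your derivation of $[X_\sK]=\sum_j\alpha_j(\sK)[X_j]$ is the paper's own argument: formula~(\ref{alpha}), Proposition~\ref{topologicalgamma}, and the fact that Chern numbers detect $\Omega^U_*$.

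For the rank you go further than the paper. Its proof only cites $\dim\ker(A^t-E)=\lfloor\frac{m+1}{2}\rfloor$ and tacitly assumes that the $\gamma_I$ span the whole $(+1)$-eigenspace of $A^t$, whereas equation~(\ref{gammadependence}) only gives containment. Your trace computation makes the upper bound explicit. Your $\chi_y$-argument supplies the missing lower bound, and with it the spanning claim that is also used in the proof of Theorem~\ref{eqcomplexcobordism}.

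Justify $\chi_y(X_j)$ only through $\chi_y(X_\sK)=\sum_p h_p(\Bier(\sK))(-y)^p$ and the fact that suspension multiplies the $h$-polynomial by $(1+t)$. Do not use the product picture. $X_j$ is not bordant to the product with its standard structure: the paper's example gives $c_1^3[X_1]=56\neq 54$. So multiplicativity of $\chi_y$ does not compute $\chi_y(X_j)$ directly. Independence of $Q_k(t):=(1+t)^{m-1-2k}(1+t+\cdots+t^{2k})$ then follows because the $(1+t)$-adic valuations $m-1-2k$ are distinct: the second factor equals $1$ at $t=-1$.

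The genuine gap is the last sentence of the theorem. A basis of the free abelian group $G$ means a $\Z$-basis, and you only show that $\{[X_{m-1-2k}]\}$ is a $\Q$-basis of $G\otimes\Q$. The relations $2[X_j]=\sum_{k>j}(\cdots)[X_k]$ do not give integrality term by term. For $m=6$, $j=0$ the right-hand side is $6[X_1]-15[X_2]+20[X_3]-15[X_4]+6[X_5]$. It becomes even only after substituting $[X_4]=[X_5]$ and $[X_2]=2[X_3]-[X_5]$. Your proposed repair via ``$\alpha$-vectors of suitable complexes'' is not specified.

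You can close the gap with your own tool, as follows.

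1. Write $[X_j]=\sum_k c_{jk}[X_{m-1-2k}]$ with unique $c_{jk}\in\Q$.
2. Apply $\chi_y$ to get $P_j=\sum_k c_{jk}Q_k$, where $P_j(t)=(1+t)^j(1+t+\cdots+t^{m-1-j})$.
3. Expand the palindromic polynomial $1+t+\cdots+t^{2k}=\sum_{l\le k}g_{kl}\,t^l(1+t)^{2k-2l}$ with $g_{kl}\in\Z$. Evaluating at $t=-1$ gives $g_{kk}=(-1)^k$.
4. Hence $Q_k=\sum_{l\le k}g_{kl}\,t^l(1+t)^{m-1-2l}$ is triangular with diagonal entries $\pm1$ with respect to the $\Z$-basis $\{t^l(1+t)^{m-1-2l}\}$ of integral palindromic polynomials of degree $m-1$. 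So the $Q_k$ form a $\Z$-basis of that lattice.
5. Each $P_j$ lies in this lattice, so all $c_{jk}\in\Z$. Since $\Omega^U_*$ is torsion-free, $\{[X_{m-1-2k}]\}$ therefore generates $G$ over $\Z$.
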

\begin{proof}
It remains to observe that the last part is a consequence of the identity 
$\dim\ker(A^t-E)=\lfloor\frac{m+1}{2}\rfloor$. 
\end{proof}

As we mentioned above, the converse statement to that of Proposition~\ref{complexcobordism}  does not hold. However, here is the necessary and sufficient condition for the two canonical toric manifolds to be bordant in $\Omega^U_*$. 
\begin{theorem}\label{eqcomplexcobordism}
Let $\sK_1$ and $\sK_2$ be two simplicial complexes on $[m], \ m\geq 2$ not equal to $\Delta_{[m]}$. Then one has:
$$
[X_{\sK_1}]=[X_{\sK_2}]\text{ in }\Omega^U_* \ \text{if and only if} \ f_{i-1}(\sK_1)-f_{i-1}(\sK_2)=f_{m-i-1}(\sK_1)-f_{m-i-1}(\sK_2), \ 1\leq i\leq\lfloor\frac{m-1}{2}\rfloor.
$$ 
\end{theorem}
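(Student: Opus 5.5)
The plan is to reduce the bordism equality to a linear-algebra condition on the difference of $\alpha$-vectors, using Theorem~\ref{additive} and Proposition~\ref{RelationsXJmanifoldsCoro}, and then translate that condition into $f$-vectors through the generating-function identity $\sum_{p}f_{p-1}(t-1)^p=\sum_k\alpha_kt^k$.

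\textbf{Step 1: reduction to a kernel condition.} By Theorem~\ref{additive},
\[
[X_{\sK_1}]-[X_{\sK_2}]=\sum_{j=0}^{m-1}d_j[X_j],\qquad d:=\alpha(\sK_1)-\alpha(\sK_2).
\]
So the question is when an integer combination $\sum_j d_j[X_j]$ vanishes. Proposition~\ref{RelationsXJmanifoldsCoro} says that the relations among the $[X_j]$ are spanned by the vectors $e_j-Ae_j$, since the $j$-th relation reads $[X_j]=\sum_k a_{k,j}[X_k]$. Hence the relation space is $\operatorname{im}(E-A)$.

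Since $A^2=E$, we have $\operatorname{im}(E-A)=\ker(E+A)$ over $\Q$. This is consistent with the rank count $\dim\ker(A^t-E)=\lfloor\frac{m+1}{2}\rfloor$ used in Theorem~\ref{additive}.

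Therefore $[X_{\sK_1}]=[X_{\sK_2}]$ holds if and only if $Ad=-d$. Because $\mu=A\alpha$, this condition reads
\[
\mu(\sK_1)-\mu(\sK_2)=-\bigl(\alpha(\sK_1)-\alpha(\sK_2)\bigr).
\]
To be safe, I would double-check the equivalence directly through Chern numbers. By (\ref{alpha}) and Proposition~\ref{topologicalgamma}, equality of bordism classes means $\langle d,\gamma_I\rangle=0$ for all $I$. The $\gamma_I$ span the fixed space of $A^t$, whose annihilator is exactly $\ker(E+A)$.

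\textbf{Step 2: passage to $f$-vectors.} Put $g_p:=f_{p-1}(\sK_1)-f_{p-1}(\sK_2)$; note that $g_0=0$. We have $\sum_k d_kt^k=\sum_{p=1}^{m-1}g_p(t-1)^p$.

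Next we need the generating polynomial of the $\mu$-vector. From the explicit formula for $\mu_k$ in terms of $f$, using $(-1)^{m-k-1}t^k=(-1)^{m-1}(-t)^k$, summing over $k$ gives
\[
\sum_k\mu_kt^k=(-1)^{m-1}\sum_j(-1)^jf_{j-1}(1-t)^{m-j}+(\text{a correction from } j=0,\ k=m).
\]
In the difference of two complexes the correction disappears, because $g_0=0$. Also $(-1)^{m-1}(-1)^p(1-t)^{m-p}=-(t-1)^{m-p}$. So the condition $Ad=-d$ becomes
\[
\sum_{p=1}^{m-1}g_p\bigl((t-1)^p-(t-1)^{m-p}\bigr)=0.
\]

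\textbf{Step 3: reading off the coefficients.} Expand in the basis $(t-1)^q$, $1\le q\le m-1$. The coefficient of $(t-1)^q$ is $g_q-g_{m-q}$, so the condition is $g_q=g_{m-q}$ for all $q$. These equations are symmetric under $q\mapsto m-q$ and trivial when $q=m/2$. Hence the condition is equivalent to $g_i=g_{m-i}$ for $1\le i\le\lfloor\frac{m-1}{2}\rfloor$, which is exactly the stated criterion.

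\textbf{Main obstacle.} The delicate point is Step 1: the relations of Proposition~\ref{RelationsXJmanifoldsCoro} must generate all relations, not merely some of them. I would settle this through the Chern-number pairing with the $\gamma_I$, which span precisely the $+1$-eigenspace of $A^t$, rather than relying on the relation list alone. A secondary point needing care is the boundary term $k=m$ when forming the generating function of $\mu$; it is harmless only because it cancels in the difference.
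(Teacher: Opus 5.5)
Your argument is correct and is essentially the paper's proof. The paper likewise uses that the $\gamma_I$ span $E_+(A^t)$ to reduce the bordism equality to $Ad=-d$. It differs only in Step~2: it rewrites the condition as $\alpha(\sK_1)+\alpha(\sK_1^\vee)=\alpha(\sK_2)+\alpha(\sK_2^\vee)$ via $\mu(\sK)=\alpha(\sK^\vee)$ and then uses $f_{i-1}(\sK^\vee)=\binom{m}{i}-f_{m-i-1}(\sK)$, which is exactly what your generating function $\sum_k\mu_kt^k=t^m-\sum_j f_{j-1}(t-1)^{m-j}$ encodes.

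One caveat on your ``main obstacle'': saying the $\gamma_I$ span $E_+(A^t)$ is equivalent to the completeness of the relations in Proposition~\ref{RelationsXJmanifoldsCoro}, not an independent way of establishing it. So, like the paper, you are really resting on the rank count $\lfloor\frac{m+1}{2}\rfloor$ of Theorem~\ref{additive}. That count does hold: $\chi_y(X_{m-1-2k})=(1-y)^{m-1-2k}\sum_{i=0}^{2k}(-y)^i$ vanishes to order exactly $m-1-2k$ at $y=1$, so these classes are linearly independent.
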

\begin{proof}
For any simplicial complex $\sK\neq\Delta_{[m]}$ on $[m]$ with $m\geq 2$, we have the following identity due to $(\ref{chernnumbers})$:  
\[
c_I[X_{\sK}]=\langle \alpha(\sK),\gamma_I\rangle, \ I\vdash m-1.
\] 
By equations $(\ref{gammadependence})$, the vectors $\gamma_I, \ I\vdash m-1$ span the $(+1)-$eigenspace $E_+(A^t)$ of the matrix $A^t$.  Since $A$ is an involution, it follows from 
$E_+(A^t)^\perp=E_-(A)$ that \[[X_{\sK_1}]=[X_{\sK_2}] \ \text{if and only if} \ A\alpha(\sK_1)-A\alpha(\sK_2)=\alpha(\sK_2)-\alpha(\sK_1).\] The relation $\mu(\sK)=A\alpha(\sK)$ together with 
Proposition~\ref{AlphaAndMuAreDualProp} implies 
\[[X_{\sK_1}]=[X_{\sK_2}] \ \text{if and only if} \ \alpha(\sK_1)+\alpha(\sK_1^\vee)=\alpha(\sK_2)+\alpha(\sK_2^\vee).\] Applying the transformation matrix $B$ relating the $\alpha$- and $f$-vectors $(\ref{alphaExplicitFormula})$, which is also an involution, gives 
\[[X_{\sK_1}]=[X_{\sK_2}] \ \text{if and only if} \ f(\sK_1)+f(\sK_1^\vee)=f(\sK_2)+f(\sK_2^\vee),\] which finishes the proof, since $f_{i-1}(\sK^\vee)={m\choose i}-f_{m-i-1}(\sK), \ 0\leq i\leq m-1.$
\end{proof}

\begin{example}
For $m=4$, we obtain $[X_{\sK}]=[X_{\sK'}]$ in $\Omega_{6}^U$ if and only if 
\[f_0-f_0'=f_2-f_2'.\] The complexes $\sK=\{1,2,3\}$ and $\sK'=\{123,4\}$ satisfy this condition.
For $m=5$, the relations on $f$-vectors are \[f_0-f_0'=f_3-f_3', \ f_1-f_1'=f_2-f_2'.\] These are satisfied by $\sK=\{1,23,34,35,45\}$ and $\sK'=\{1,234,35,45\}$, so $[X_{\sK}]=[X_{\sK'}]$ in $\Omega_{8}^U$. Note that both examples are pairs of complexes which are neither isomorphic nor dual to each other.
\end{example} 

\subsection{$\chi_y$-genus}
The $\chi_y$-genus is the complex cobordism genus determined by the Hodge numbers. For a toric manifold, it is completely determined by the $h$-vector of the underlying fan, that is, by the Betti numbers of the manifold. Therefore, the $\chi_y$-genus is the smooth invariant of toric manifolds, rather than a complex invariant. For a canonical toric manifold $X_{\sK}$, we have the following well-known formula:
\begin{equation}\label{chiyoverh}
\chi_y(X_{\sK})=\sum_{p=0}^{m-1}h_p(\mathrm{Bier}(\sK))(-y)^p.
\end{equation}
Recall that 
\[
h_p(\mathrm{Bier}(\sK))=\beta_{2p}(X_\sK)=h^{p,p}(X_\sK).
\]
Since the canonical manifolds $X_j$ are diffeomorphic to the products of projective spaces by Proposition \ref{XbasisProp}, we have the following
\begin{equation}\label{chiy}
\chi_y(X_{\sK})=\sum_{k=0}^{m-1}\alpha_k(\sK)(1-y)^k\frac{1-(-y)^{m-k}}{1+y}.
\end{equation} 
We see that the $\chi_y$-genus of a canonical toric manifold $X_{\sK}$ satisfies 
\[\chi_y(X_\sK)=(-y)^{m-1}\chi_{1/y}(X_{\sK}),\] which is equivalent, by $(\ref{chiyoverh})$, to the Dehn-Sommerville relations for the Bier sphere $\mathrm{Bier}(\sK)$:
\[h_p(\Bier(\sK))=h_{m-p-1}(\mathrm{Bier}(\sK)), \ 0\leq p\leq m-1.\]
The equation $(\ref{chiy})$ also provides a topological proof of the formula for $h_p(\mathrm{Bier}(\sK)), \ 0\leq p\leq m-1$ in terms of the $f$-vector of $\sK$. The combinatorial proof was obtained in~\cite{BPSZ05}.

\begin{proposition}\label{TopProofHvectorBierProp}
For a simplicial complex $\sK\neq\Delta_{[m]}$ on $[m]$ with $m\geq 2$, we have $(f_i=f_i(\sK))\colon$
\[h_p(\mathrm{Bier(\sK)})=1+(f_0+\cdots+f_{p-1})-(f_{m-p-1}+\cdots+f_{m-2}), \ \ 1\leq p\leq\lfloor\frac{m-1}{2}\rfloor.\]
\end{proposition}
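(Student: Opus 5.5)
Equate the two expressions for $\chi_y(X_\sK)$, namely (\ref{chiyoverh}) and (\ref{chiy}), and read off coefficients of $(-y)^p$. First rewrite (\ref{chiy}) in terms of the $f$-vector using $\sum_k\alpha_k t^k=\sum_p f_{p-1}(t-1)^p$. Note that
\[
\frac{1-(-y)^{m-k}}{1+y}=\sum_{q=0}^{m-k-1}(-y)^q ,
\]
so $\chi_y(X_\sK)=\sum_k\alpha_k(1-y)^k\sum_{q=0}^{m-k-1}(-y)^q$. The building block $(1-y)^k\frac{1-(-y)^{m-k}}{1+y}$ is the $\chi_y$-genus of $(\C P^1)^k\times\C P^{m-k-1}$. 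It splits as $(1-y)^k/(1+y)$ minus $(-y)^{m-k}(1-y)^k/(1+y)$, and the two pieces are handled separately.

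Set $t=1-y$ formally. For the first piece,
\[
\sum_k\alpha_k(1-y)^k=\sum_p f_{p-1}(-y)^p .
\]
This is valid because the generating identity involves only $\alpha_0,\dots,\alpha_{m-1}$ and $f_{-1},\dots,f_{m-2}$, and $\sK\neq\Delta_{[m]}$ so $f_{m-1}=0$. Dividing by $1+y$ and keeping the coefficients of $(-y)^p$ for small $p$ gives partial sums:
\[
\frac{\sum_p f_{p-1}(-y)^p}{1+y}=\sum_{p}\Big(\sum_{i\le p} f_{i-1}\Big)(-y)^p .
\]
Hence the first piece contributes $f_{-1}+f_0+\cdots+f_{p-1}=1+f_0+\cdots+f_{p-1}$ to the coefficient of $(-y)^p$. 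The division by $1+y$ is only formal here. The actual sum over $q$ is truncated at $q\le m-k-1$, and the truncation is exactly what the second piece records.

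For the second piece, write
\[
(-y)^{m-k}(1-y)^k=(-y)^m\Big(\frac{1-y}{-y}\Big)^k=(-y)^m\Big(1-\frac1y\Big)^k ,
\]
and the last expression equals $(-y)^m(1+s)^k$ with $s=-1/y$ after a sign check. Then apply the same $\alpha\to f$ identity with $t=1+s$, giving $\sum_p f_{p-1}s^p$. The second piece is therefore a polynomial reflected around degree $m$:
\[
\frac{(-y)^m}{1+y}\sum_p f_{p-1}(1/y)^p(\pm1)^p .
\]
Its coefficients of $(-y)^p$ for $p\le\lfloor\frac{m-1}{2}\rfloor$ come out as tail sums $f_{m-p-1}+\cdots+f_{m-2}$ with a minus sign. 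That yields the stated formula.

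The main obstacle is careful sign and index bookkeeping in this reflected term. Since the division by $1+y$ is exact only for the whole expression, I would treat it by multiplying everything by $1+y$ and comparing. Writing $H(y)=\sum_p h_p(-y)^p$, we need
\[
(1+y)H(y)=\sum_p f_{p-1}(-y)^p-(-y)^m\sum_p f_{p-1}(-1/y)^{p}\cdot(\text{sign}).
\]
The coefficient of $(-y)^p$ on the left is $h_p-h_{p-1}$. Comparing coefficients in low degrees $p\le\lfloor\frac{m-1}{2}\rfloor$, the right side contributes $f_{p-1}-f_{m-p-1}$. Summing these telescoping differences from $0$ to $p$ with $h_0=1$ gives the claim. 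As a sanity check I would verify with $\sK=\varnothing_{[m]}$, where $h_p=1$, so the formula should reduce to $1$.
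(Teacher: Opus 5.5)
Your proposal is correct and follows essentially the paper's route: substituting the $\alpha$–$f$ relation into (\ref{chiy}) gives $(1+y)\chi_y(X_\sK)=\sum_{i=0}^{m-1} f_{i-1}\big((-y)^i-(-y)^{m-i}\big)$. The coefficient of $(-y)^p$ is then extracted, by you through telescoping and by the paper through expanding the geometric sums. The sign you left open is $+1$, since $(-y)^m(-1/y)^p=(-y)^{m-p}$.
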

\begin{proof}
Substitute $\alpha_k(\sK)$ into the sum $(\ref{chiy})$ and rearrange the double sum to get: 
\[\chi_y(X_{\sK})=\sum_{i=0}^{m-1}(-1)^if_{i-1}\frac{1}{1+y}\sum_{j=0}^i{i\choose j}(y-1)^j(1-(-y)^{m-j}).\] Summing the inner binomial sum gives the following:
\[
\chi_y(X_{\sK})=\sum_{i=0}^{m-1}f_{i-1}\frac{(-y)^i-(-y)^{m-i}}{1+y}=\sum_{i=0}^{m-1}f_{i-1}\big[(-y)^i+\cdots+(-y)^{m-i-1}\big].
\] 
It remains to extract the coefficient by $(-y)^p$ to finish the proof.
\end{proof} 

The $\chi_y$-genus specializes to the classical invariants, providing their expressions in terms of $\alpha(\sK)$:
\[
\chi(X_\sK)=\chi_{-1}(X_\sK)=\sum_{p=0}^{m-1}h_p(\Bier(\sK))=\sum_{p=0}^{m-1}\alpha_p(\sK)2^p(m-p),
\]
\[
\Td(X_{\sK})=\chi_0(X_{\sK})=\sum_{p=0}^{m-1}\alpha_p(\sK)=1,
\]
\begin{equation}\label{chiysign}\mathrm{sign}(X_{\sK})=\chi_1(X_{\sK})=\sum_{p=0}^{m-1}(-1)^ph_p(\Bier(\sK))=\begin{cases}\alpha_0(\sK),& m \ \text{is odd}, \\ 0, & m \ \text{is even}.\end{cases}\end{equation}

\begin{remark}
The above signature formula confirms the known formula for the signature of the toric manifold $X_\Sigma$ arising from combinatorics of a complete, regular fan $\Sigma$ in $\mathbb{R}^d$:
\[
\mathrm{sign}(X_\Sigma)=\sum_{j=0}^{d}(-1)^jh_j,
\] 
where $h_j$ are components of the $h$-vector of the underlying simplicial complex of $\Sigma$. It follows that if $\Sigma=\Sigma_\sK$ and $m$ is even, then $d=m-1$ is odd and the right hand side is zero due to the Dehn-Sommerville equations for the Bier sphere $\Bier(\sK)$. When $m$ is odd, we get 
$$
\sign(X_\sK)=h_0(\Bier(\sK))-h_{1}(\Bier(\sK))+\ldots+h_{m-1}(\Bier(\sK))=
$$
$$
=g_0(\Bier(\sK))-g_1(\Bier(\sK))+g_2(\Bier(\sK))+\ldots+(-1)^{(m-1)/2}g_{(m-1)/2}(\Bier(\sK)),
$$ where $g_i(\mathrm{Bier}(\sK))=h_i(\Bier(\sK))-h_{i-1}(\Bier(\sK)), \ 1\leq i\leq(m-1)/2$ are components of the $g$-vector of $\Bier(\sK)$.
Due to Proposition~\ref{TopProofHvectorBierProp}, for any simplicial complex $\sK\neq\Delta_{[m]}$ on $[m]$ one has:
$$
g_{i}(\Bier(\sK))=f_{i-1}(\sK)-f_{m-i-1}(\sK) \text{ for }0\leq i\leq (m-1)/2.
$$
Hence, plugging this into the previous formula, we finally obtain:
$$
\sign(X_\sK)=1-f_0(\sK)+f_1(\sK)-f_2(\sK)+\ldots = 1-\chi(\sK),
$$
where we used the fact that $h_0=g_0=1$ for each sphere and $f_{-1}=1$ for each simplicial complex. 
\end{remark}

Applying Proposition \ref{TopProofHvectorBierProp} and Theorem \ref{eqcomplexcobordism}, we find that the bordism class of a canonical toric manifold $X_{\sK}$ is completely determined by the $h$-vector of the Bier sphere $\Bier(\sK)$.

\begin{theorem}\label{eqcomplexcobordismh}
Let $\sK_1$ and $\sK_2$ be two simplicial complexes on $[m], \ m\geq 2$ not equal to $\Delta_{[m]}$. Then one has:
$$
[X_{\sK_1}]=[X_{\sK_2}]\text{ in }\Omega^U_* \ \text{if and only if} \ \ h_i(\mathrm{Bier}(\sK_1))=h_i(\mathrm{Bier}(\sK_2)), \ 1\leq i\leq\lfloor\frac{m-1}{2}\rfloor.
$$ 
\end{theorem}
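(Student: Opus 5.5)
The plan is to combine Theorem~\ref{eqcomplexcobordism} with Proposition~\ref{TopProofHvectorBierProp}. Theorem~\ref{eqcomplexcobordism} characterizes $[X_{\sK_1}]=[X_{\sK_2}]$ by the equalities
\[
f_{i-1}(\sK_1)-f_{m-i-1}(\sK_1)=f_{i-1}(\sK_2)-f_{m-i-1}(\sK_2),\qquad 1\leq i\leq\lfloor\tfrac{m-1}{2}\rfloor .
\]
The task is therefore to show that this family of equalities is equivalent to the family $h_p(\Bier(\sK_1))=h_p(\Bier(\sK_2))$ for $1\leq p\leq\lfloor\frac{m-1}{2}\rfloor$.

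For a fixed $\sK$, write $d_i:=f_{i-1}(\sK)-f_{m-i-1}(\sK)$. By Proposition~\ref{TopProofHvectorBierProp}, for $1\leq p\leq\lfloor\frac{m-1}{2}\rfloor$ we have
\[
h_p(\Bier(\sK))=1+\sum_{j=0}^{p-1}f_j-\sum_{j=m-p-1}^{m-2}f_j .
\]
Re-index the first sum as $\sum_{i=1}^{p} f_{i-1}$ and the second as $\sum_{i=1}^{p} f_{m-i-1}$. Pairing the terms gives
\[
h_p(\Bier(\sK))=1+\sum_{i=1}^{p} d_i .
\]
This is just a telescoping form of the $g$-vector identity $g_i=d_i$ recorded in the Remark after~\eqref{chiysign}.

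It follows that the map $(d_1,\ldots,d_N)\mapsto(h_1,\ldots,h_N)$, with $N=\lfloor\frac{m-1}{2}\rfloor$, is an invertible affine map: it is the identity-shift composed with a unitriangular partial-sum matrix, and its inverse is $d_p=h_p-h_{p-1}$ with $h_0=1$. Hence the vectors $(d_i(\sK_1))_i$ and $(d_i(\sK_2))_i$ coincide if and only if the corresponding $h$-vector truncations coincide. Combined with Theorem~\ref{eqcomplexcobordism}, this gives both directions of the statement.

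The only step needing care is index bookkeeping: checking that the range $1\leq p\leq\lfloor\frac{m-1}{2}\rfloor$ in Proposition~\ref{TopProofHvectorBierProp} matches the range of $i$ in Theorem~\ref{eqcomplexcobordism}, so that the two sums in the formula for $h_p$ pair up without overlap. This holds because $p\leq m-p-1$ on that range. The remaining $h_p$ with larger $p$ carry no extra information, since Dehn--Sommerville gives $h_p=h_{m-1-p}$ and $h_0=h_{m-1}=1$ always.
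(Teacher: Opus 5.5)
Your proposal is correct and follows the same route as the paper, which obtains this theorem simply by combining Theorem~\ref{eqcomplexcobordism} with Proposition~\ref{TopProofHvectorBierProp}. Your telescoping identity $h_p(\Bier(\sK))=1+\sum_{i=1}^{p}\bigl(f_{i-1}(\sK)-f_{m-i-1}(\sK)\bigr)$ (equivalently $g_i=f_{i-1}-f_{m-i-1}$), inverted using $h_0=1$, just spells out the step the paper leaves implicit.
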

\subsection{Milnor numbers} Recall that due to the theorem of Milnor and Novikov, we have the following discription of the unitary bordism ring: 
\[
  \Omega^U_*\cong\mathbb{Z}[a_{i}\colon i\ge 1],\quad \deg{a_i}=2i.
\]
Moreover, polynomial generators $a_i$ are detected by their Milnor numbers $s_i$. For any integer $i\geq 1$, set 
\[
  m_{i}=\begin{cases}
  1&\text{if $i+1\neq p^k$ for any prime $p$;}\\
  p&\text{if $i+1=p^k$ for some prime $p$ and integer $k>0$.}
  \end{cases}
\]
Then the bordism class of a stably complex manifold $M^{2i}$ may be taken to be the $2i$-dimensional generator $a_i$ if and only if $s_{i}[M^{2i}]=\pm m_{i}$. In order to determine which canonical toric manifolds of simplicial complexes may serve as generators of the unitary bordism ring $\Omega^U_\ast$, we are going to obtain a general formula for their Milnor numbers:
\[
s_{m-1}[X_K]=\langle x_1^{m-1}+\cdots+x_m^{m-1}+{x'_1}^{m-1}+\cdots+{x'_m}^{m-1},[X_{K}]\rangle.
\]

\begin{lemma}\label{class}
The characteristic class $s_{m-1}$ depends only on $\sigma_1$ and $u$ in the following way
\[
s_{m-1}=\Big((-1)^m+1\Big)\sigma_1u^{m-2}+mu^{m-1}.
\]
\end{lemma}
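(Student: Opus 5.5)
We work in the presentation $H^\ast(X_\sK;\Z)\cong\Z[x_1,\ldots,x_m,u]/(\mathcal I'+\mathcal J')$. The key input is the relation $x_i^2=-ux_i$ from $\mathcal J'$. By induction on the exponent it gives $x_i^k=(-u)^{k-1}x_i$ for every $k\ge1$, and hence
\[
\sum_{i=1}^m x_i^{m-1}=(-u)^{m-2}\sigma_1 .
\]

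For the primed variables we use $x'_i=x_i+u$ and expand binomially:
\[
{x'_i}^{m-1}=\sum_{k=0}^{m-1}{m-1\choose k}x_i^k u^{m-1-k}.
\]
For $k\ge1$ substitute $x_i^k=(-u)^{k-1}x_i$. This gives
\[
{x'_i}^{m-1}=u^{m-1}+x_iu^{m-2}\sum_{k=1}^{m-1}{m-1\choose k}(-1)^{k-1}.
\]
The inner sum equals $1-(1-1)^{m-1}=1$, since $m\ge2$. So ${x'_i}^{m-1}=u^{m-1}+x_iu^{m-2}$, and summing over $i$,
\[
\sum_{i=1}^m{x'_i}^{m-1}=mu^{m-1}+\sigma_1u^{m-2}.
\]

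Adding the two sums yields
\[
s_{m-1}=\big((-1)^{m-2}+1\big)\sigma_1u^{m-2}+mu^{m-1}=\big((-1)^m+1\big)\sigma_1u^{m-2}+mu^{m-1},
\]
as claimed.

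The only points needing care are the sign bookkeeping in the alternating binomial sum and the edge case $m=2$. In that case $x_i^{1}$ needs no reduction, and the formula still holds directly since $s_1=\sigma_1+\sigma'_1=2\sigma_1+2u$. No use of $\mathcal I'$, and hence of the combinatorics of $\sK$, is required.
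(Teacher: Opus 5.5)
Your proof is correct and is essentially the paper's argument: reduce powers via $x_i^2=-ux_i$ from $\mathcal J'$, expand $(x_i+u)^{m-1}$ binomially, and evaluate the alternating binomial sum to get ${x'_i}^{m-1}=x_iu^{m-2}+u^{m-1}$. You additionally spell out the power reduction $x_i^k=(-u)^{k-1}x_i$ and check the case $m=2$, both of which the paper leaves implicit.
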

\begin{proof} 
First, we note the following
\[x_1^{m-1}+\cdots+x_m^{m-1}=(-1)^m\sigma_1u^{m-2}.\] Since $x'_i=u+x_i, i=1,\ldots,m$ we have
\[{x'_i}^{m-1}=-x_iu^{m-2}\sum_{j=0}^{m-2}{m-1 \choose j}(-1)^{m-1-j}+u^{m-1}=x_iu^{m-2}+u^{m-1},\] which gives
\[{x'_1}^{m-1}+\cdots+{x'_m}^{m-1}=\sigma_1u^{m-2}+mu^{m-1}.\]
\end{proof}
The Lemma~\ref{face} and Lemma~\ref{class} together imply the following proposition.

\begin{proposition}
The Milnor number for the canonical toric manifold $X_\sK$ of a simplicial complex $\sK\neq\Delta_{[m]}$ on $[m]$ with $m\geq 2$ is given in terms of the $\alpha$-vector of $\sK$ by
\[
s_{m-1}[X_\sK]=m\alpha_0(\sK)+\Big((-1)^m+1\Big)\alpha_1(\sK).
\]
\end{proposition}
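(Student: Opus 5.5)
The plan is to evaluate the class from Lemma~\ref{class} on the fundamental class and identify each term with a component of the $\alpha$-vector via Lemma~\ref{face}. By Lemma~\ref{class},
\[
s_{m-1}[X_\sK]=\big((-1)^m+1\big)\langle\sigma_1u^{m-2},[X_\sK]\rangle+m\langle u^{m-1},[X_\sK]\rangle ,
\]
so it suffices to compute the two evaluations $\sigma_1u^{m-2}[X_\sK]$ and $u^{m-1}[X_\sK]$. These are precisely the quantities $\sigma_ku^{m-1-k}[X_\sK]$ for $k=1$ and $k=0$, and by definition of the $\alpha$-vector these equal $\alpha_1(\sK)$ and $\alpha_0(\sK)$.

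To make this rigorous, I would use \eqref{combinatorialchernclass}. For $k=1$ it writes $\sigma_1=\sum_{\{i\}\in\sK}x_i$. Since we assume $\sK$ has no ghost vertices, this is just $x_1+\cdots+x_m$. Applying Lemma~\ref{face} to each vertex $S=\{i\}$ gives
\[
\sigma_1u^{m-2}[X_\sK]=\sum_{i}\big(1-\chi(\mathrm{link}_\sK\{i\})\big)=\alpha_1(\sK).
\]
For $k=0$, I apply Lemma~\ref{face} to the empty face, with the paper's convention $\mathrm{link}_\sK\varnothing=\sK$. This gives $u^{m-1}[X_\sK]=1-\chi(\sK)=\alpha_0(\sK)$. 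Substituting both values into the displayed identity yields the claimed formula.

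The only delicate point is checking that Lemma~\ref{face} really covers $S=\varnothing$. Its proof requires choosing $i_0\notin S$ with $S\cup\{i_0\}\in\sK$, and for the empty face any real vertex $i_0$ works. The Euler characteristic additivity step then goes through unchanged. The sign bookkeeping in Lemma~\ref{class} is already done in the paper, so nothing else needs verification.
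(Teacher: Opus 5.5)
Your proposal is correct and is exactly the paper's argument: the paper obtains the proposition by combining Lemma~\ref{class} with Lemma~\ref{face}, i.e.\ with the identification $\alpha_k(\sK)=\sigma_k u^{m-1-k}[X_\sK]$ for $k=0,1$. The no-ghost-vertex assumption is not actually needed, since $x_i=0$ in $H^*(X_\sK;\Z)$ for a ghost vertex $i$, and \eqref{combinatorialchernclass} already accounts for this.
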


We can check that the formula for the dual complex $\sK^\vee$ gives the same result
\[s_{m-1}[X_{\sK^\vee}]=s_{m-1}[X_\sK],\] which is a necessary condition for any characteristic number.

In view of the Milnor-Novikov theorem in complex cobordism theory, by a direct computation, we obtain the conditions under which canonical toric manifolds represent polynomial generators of the unitary bordism ring.

\begin{theorem}
The class $[X_\sK]\in\Omega^U_{2(m-1)}$ of the canonical toric manifold $X_\sK$ associated with a simplicial complex $\sK$ on $[m]$ with $m\geq 2$ is a polynomial generator of the unitary bordism ring $\Omega_\ast^U$ if and only if one of the following two conditions holds:
\begin{itemize}
    \item $m=p$ is an odd prime and $\alpha_0(\sK)=\pm 1$;
    \item $m=2^k$ and $2^{k-1}\alpha_0(\sK)+\alpha_1(\sK)=\pm 1$.
\end{itemize}
\end{theorem}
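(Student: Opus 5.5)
We combine the Milnor number formula just proved, $s_{m-1}[X_\sK]=m\alpha_0(\sK)+((-1)^m+1)\alpha_1(\sK)$, with the Milnor--Novikov criterion: $[X_\sK]$ can be taken as the generator $a_{m-1}$ if and only if $s_{m-1}[X_\sK]=\pm m_{m-1}$. Here $i=m-1$, so $i+1=m$. Everything reduces to elementary arithmetic on $m$, split by the parity of $m$ and by whether $m$ is a prime power.

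\emph{Case $m$ odd.} Then $(-1)^m+1=0$ and $s_{m-1}[X_\sK]=m\alpha_0(\sK)$.
\begin{itemize}
\item If $m$ is not a prime power, then $m_{m-1}=1$. We would need $m\alpha_0=\pm1$, which is impossible because $m\ge 3$.
\item If $m=p^k$ with $p$ an odd prime, then $m_{m-1}=p$. The equation $p^k\alpha_0=\pm p$ forces $k=1$ and $\alpha_0=\pm1$.
\end{itemize}
This is exactly the first condition. Note also that $m$ odd and $m\geq 2$ excludes $m=1$.

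\emph{Case $m$ even.} Now $s_{m-1}[X_\sK]=m\alpha_0+2\alpha_1$, which is always even.
\begin{itemize}
\item If $m$ is not a prime power, then $m_{m-1}=1$ is odd, so no solution exists.
\item If $m=p^k$ with $p$ odd, this contradicts $m$ even, so the case does not occur.
\item If $m=2^k$, then $m_{m-1}=2$. The condition $2^k\alpha_0+2\alpha_1=\pm2$ is equivalent to $2^{k-1}\alpha_0+\alpha_1=\pm1$, which is the second condition. For $k=1$ it reads $\alpha_0+\alpha_1=\pm1$.
\end{itemize}
Conversely, in each listed case the Milnor number equals $\pm m_{m-1}$, so $[X_\sK]$ is a generator.

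The only delicate point is that ``polynomial generator'' must be understood through the Milnor--Novikov criterion, applied in degree $2(m-1)$. Apart from that, I expect no real obstacle: the argument is a direct case check of divisibility, and the main care needed is not to overlook the even non-prime-power case, which is excluded by the parity argument.
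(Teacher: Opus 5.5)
Your proposal is correct and follows the same route as the paper: substitute the Milnor number formula $s_{m-1}[X_\sK]=m\alpha_0(\sK)+((-1)^m+1)\alpha_1(\sK)$ into the Milnor--Novikov criterion $s_{m-1}=\pm m_{m-1}$, then split by the parity of $m$ and by whether $m$ is a prime power. The paper summarizes this step only as ``a direct computation''; your case check supplies exactly that computation, including the parity argument that excludes even $m$ which is not a power of $2$.
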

\begin{example}
Let $\Sigma_m^d$ be a simplicial $d$-dimensional sphere with $m$ vertices. Then the simplicial complexes $\sK=\Sigma_p^d$ and $\sK=v\ast\Sigma_{2^k-1}^d$ satisfy the conditions of the theorem.
\end{example}

\subsection{Todd class} The total Todd class of $X_\sK$ is defined by 
\[
\mathrm{Td}=1+\Td_1+\ldots+\Td_{m-1}=\prod_{j=1}^{m}\frac{x_j}{1-e^{-x_j}}\cdot\frac{x'_j}{1-e^{-x'_j}}.
\] 
The polynomial $\mathrm{Td}_{m-1}$ is symmetric and can be expanded in Chern classes over the rational coefficients 
\[
\mathrm{Td}_{m-1}=\mathrm{Td}_{m-1}(c_1,\ldots,c_{m-1})=\sum_{I\vdash m-1}\tau_I c_I.
\] 

The Todd genus of a compact, almost complex manifold is always equal to one. Therefore, for any simplicial complex $\sK\neq\Delta_{[m]}$ on $[m]$ with $m\geq 2$, we have
\[
\mathrm{Td}_{m-1}[X_\sK]=1,
\] 
which immediately implies that $[X_\sK]\neq 0$ in $\Omega^U_*$. Furthermore, alongside with equation~\eqref{alpha} this also implies nontrivial number-theoretic identities among $\gamma$-coefficients and rational coefficients $\tau_I$ in the additive expansion of the Todd class.

\begin{proposition}\label{xi}
The following identities hold
\[\xi_k=\sum_{I\vdash m-1}\tau_I\gamma_{I,k}=1\] for all $m\geq3$ and $k=0,1,\ldots,m-1$. 
\end{proposition}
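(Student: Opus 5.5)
The most direct route is to read $\xi_k$ as the Todd genus of the canonical manifold $X_k$. By Proposition~\ref{topologicalgamma}, $\gamma_{I,k}=c_I[X_k]$ for every partition $I\vdash m-1$ and every $0\le k\le m-1$. Hence
\[
\xi_k=\sum_{I\vdash m-1}\tau_I\,c_I[X_k]=\Big\langle \mathrm{Td}_{m-1}(c_1,\ldots,c_{m-1}),[X_k]\Big\rangle=\mathrm{Td}[X_k],
\]
the Todd genus of $X_k=X_{\sK_k}$ with $\sK_k=\Delta_{[k]}\ast\partial\Delta_{[m-k]}$. Each $\sK_k$ is a simplicial complex on $[m]$ different from $\Delta_{[m]}$, so $X_k$ is a canonical toric manifold, that is, a compact nonsingular complete toric variety with its canonical complex structure. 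The identity $\mathrm{Td}_{m-1}[X_\sK]=1$ recorded just above then gives $\xi_k=1$ for all $k$.

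Since the statement is phrased as a number-theoretic identity, I will also give a route that does not rely on Proposition~\ref{topologicalgamma}. It uses only formula~\eqref{alpha} and the freedom to vary $\sK$.

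First, from \eqref{alpha}, for every admissible $\sK$ we have
\[
1=\mathrm{Td}[X_\sK]=\sum_I\tau_I\langle\alpha(\sK),\gamma_I\rangle=\langle\alpha(\sK),\xi\rangle,
\]
where $\xi=(\xi_0,\ldots,\xi_{m-1})^t$. In particular $\sum_k\alpha_k(\sK)(\xi_k-1)=0$, because $\sum_k\alpha_k(\sK)=E_\sK(0)=1$; this last identity is also the relation $\Td=\sum\alpha_p=1$ displayed earlier. So the vector $\xi-\mathbf 1$ is orthogonal to every $\alpha(\sK)$.

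Next, note that the vectors $\alpha(\sK)$ do not span $\mathbb{Q}^m$. By the proof of Theorem~\ref{eqcomplexcobordism}, each $\gamma_I$, and hence $\xi$, lies in $E_+(A^t)$. The vector $\mathbf 1$ also lies there: specialising \eqref{alpha} to $\mathrm{Td}$ shows that $\langle\alpha(\sK),\mathbf 1\rangle=1=\langle\alpha(\sK^\vee),\mathbf 1\rangle$. It therefore suffices to show that the $\alpha(\sK)$ span a subspace whose orthogonal complement meets $E_+(A^t)$ trivially. The $\alpha(\sK_j)$ are the columns of the invertible matrix $A$, so they span all of $\mathbb{Q}^m$. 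Consequently $\xi-\mathbf 1=0$, with no eigenspace argument needed.

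The only delicate point is that this argument needs $\sK_j\neq\Delta_{[m]}$ for each $j$, and it needs the Todd genus of every almost complex $X_{\sK_j}$ to be $1$; both hold for canonical toric manifolds. The restriction $m\ge3$ plays no role beyond excluding degenerate low-dimensional cases.
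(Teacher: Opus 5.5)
Your proposal is correct, and your first route differs from the paper's. The paper does not use Proposition~\ref{topologicalgamma} here. It evaluates $\mathrm{Td}$ on the family $X_j$ through \eqref{alpha}, which gives the linear system $A^t\xi=\mathbf 1$. It then inverts this system using $A^2=E$ and writes $\xi=A^t\mathbf 1=\mathbf 1$, leaving the last equality (every column of $A$ sums to $1$) unstated. You instead observe that $\gamma_{I,k}=c_I[X_k]$ makes $\xi_k$ literally the Todd genus of $X_k$, so no linear algebra is needed at this stage. In substance the two arguments are the same. Proposition~\ref{topologicalgamma} is equivalent to $A^t\gamma_I=\gamma_I$, hence $A^t\xi=\xi$, and combining this with $A^t\xi=\mathbf 1$ is exactly your one-liner. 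Your version buys brevity, and it bypasses the unjustified step $A^t\mathbf 1=\mathbf 1$ entirely. The paper's version needs neither the duality $X_\sK\cong X_{\sK^\vee}$ nor the eigenvector property, only \eqref{alpha} and the invertibility of $A$.

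Your second route is essentially the paper's proof, with the implicit step supplied by $E_\sK(0)=1$. That is legitimate. However, it is exactly the identity that Corollary~\ref{SumOfAlphaCoro} later deduces from this proposition. If you want that corollary to remain an honest consequence, check the column sums of $A$ directly instead: $\sum_{i=0}^{n-1}(-1)^{n-1-i}\binom{n}{i}=1$ with $n=m-j$.

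One sentence in the second route is false and should be removed: the vectors $\alpha(\sK)$ \emph{do} span $\mathbb{Q}^m$. You observe this yourself two lines later, since the $\alpha(\sK_j)$ are the columns of the lower triangular matrix $A$ with diagonal entries $\pm1$. What is confined to the proper subspace $E_+(A^t)$ is the family $\gamma_I$ (hence $\xi$), not the $\alpha$-vectors. Your conclusion does not use the false claim, so the eigenspace detour is merely superfluous.

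Finally, you are right to attribute $\mathrm{Td}[X_\sK]=1$ to $X_\sK$ being a smooth complete toric variety, for instance $\chi_0(X_\sK)=h_0(\Bier(\sK))=1$ by \eqref{chiyoverh}. Almost complexity alone does not give this: a compact complex manifold can have Todd genus different from $1$, e.g.\ $2$ for a K3 surface.
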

\begin{proof}
For any simplicial complex $\sK\neq\Delta_{[m]}$ on $[m]$ with $m\geq 2$, it holds by $(\ref{alpha})$ that
\[
\mathrm{Td}_{m-1}[X_{\sK}]=\sum_{k=0}^{m-1}\alpha_k(\sK)\xi_k=1.
\] 
Denote $\xi=(\xi_0,\xi_1,\ldots,\xi_{m-1})^t$ and $\mathbf{1}=(1,1,\ldots,1)^t$. Applying this identity to the collection of simplicial complexes $(\ref{complexes})$, we get the system of linear equations 
\[
A^t\xi=\mathbf{1}.
\] 
Since $A$ is an involution, we have 
\[
\xi=A^t\mathbf{1}=\mathbf{1},
\]
which finishes the proof.
\end{proof}

\begin{example}
For $m=3$ we have $\gamma_{11,0}=9, \gamma_{2,0}=3,\gamma_{11,1}=\gamma_{11,2}=8, \gamma_{2,1}=\gamma_{2,2}=4$. Since $\mathrm{Td}_2=\frac{1}{12}(c_1^2+c_2)$, we obtain 
\[\xi_k=\frac{1}{12}(\gamma_{11,k}+\gamma_{2,k})=1, k=0,1,2.\] For $m=4$ we have $\gamma_{12,0}=\gamma_{12,1}=\gamma_{12,2}=\gamma_{12,3}=24$ and $\mathrm{Td}_3=\frac{c_1c_2}{24}$, which gives \[\xi_k=\frac{1}{24}\gamma_{12,k}=1, k=0,1,2,3.\]
\end{example} 
As a corollary, we obtain a topological proof of the well-known combinatorial identity that holds for all simplicial complexes.

\begin{corollary}\label{SumOfAlphaCoro}
For any simplicial complex $\sK\neq\Delta_{[m]}$ on $[m]$ with $m\geq 2$, the following identity holds:
\[
\sum_{k=0}^{m-1}\alpha_k(\sK)=\sum_{S\in\sK}(1-\chi(\mathrm{link}_{\sK}S))=1.
\]
\end{corollary}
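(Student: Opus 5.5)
The identity is a corollary of the Todd genus computation, so I will derive it from Proposition~\ref{xi} together with formula~\eqref{alpha}. Since $X_\sK$ is a compact complex manifold, $\mathrm{Td}_{m-1}[X_\sK]=1$. Expanding $\mathrm{Td}_{m-1}=\sum_{I\vdash m-1}\tau_I c_I$ and using $c_I[X_\sK]=\sum_k\alpha_k(\sK)\gamma_{I,k}$ from~\eqref{alpha}, I get
\[
1=\mathrm{Td}_{m-1}[X_\sK]=\sum_{k=0}^{m-1}\alpha_k(\sK)\sum_{I\vdash m-1}\tau_I\gamma_{I,k}=\sum_{k=0}^{m-1}\alpha_k(\sK)\xi_k .
\]
Proposition~\ref{xi} gives $\xi_k=1$ for all $k$, so $\sum_k\alpha_k(\sK)=1$. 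The second equality in the statement is then just the definition $\alpha_k(\sK)=\sum_{S\in\sK,|S|=k}(1-\chi(\mathrm{link}_\sK S))$, summed over $k$.

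The main obstacle is that Proposition~\ref{xi} is stated for $m\ge 3$, so the case $m=2$ needs a separate argument. I would avoid the case split altogether by a purely combinatorial check. By the identity $\sum_{p}f_{p-1}(t-1)^p=\sum_k\alpha_k t^k$ derived after Proposition~\ref{complexcobordism}, setting $t=1$ gives $\sum_k\alpha_k=f_{-1}(\sK)=1$. This holds for every $m\ge2$, and the sums run only up to $m-1$ because $\sK\neq\Delta_{[m]}$. This also serves as an independent confirmation of the topological argument.

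Alternatively, for $m=2$ one can check directly. The complexes $\sK\neq\Delta_{[2]}$ give $X_\sK=\C P^1$ or $(\C P^1)$ with $\alpha_0+\alpha_1=1$, which follows from $\mathrm{Td}_1=c_1/2$ and $c_1[\C P^1]=2$.
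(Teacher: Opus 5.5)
Your main argument is the paper's: the corollary follows from $\mathrm{Td}_{m-1}[X_\sK]=\sum_k\alpha_k(\sK)\xi_k=1$ together with $\xi_k=1$ (Proposition~\ref{xi}). Your supplementary check, setting $t=1$ in $\sum_p f_{p-1}(t-1)^p=\sum_k\alpha_k t^k$ to get $\sum_k\alpha_k=f_{-1}=1$, is correct. On its own it proves the statement for every $m\geq 2$ without any topology. It is exactly the ``well-known combinatorial identity'' that the paper is giving a topological proof of.

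One step needs repair. The inference ``$X_\sK$ is a compact complex manifold, hence $\mathrm{Td}[X_\sK]=1$'' is false as a general principle: a complex torus has Todd genus $0$ and a K3 surface has Todd genus $2$. The paper's sentence about compact almost complex manifolds makes the same overstatement. The correct reason is that $X_\sK$ is a complete nonsingular toric variety, so $h^{0,q}(X_\sK)=0$ for $q>0$ and $\mathrm{Td}[X_\sK]=\chi(X_\sK,\mathcal O)=1$. Inside the paper, this is the statement $\chi_0(X_\sK)=h_0(\Bier(\sK))=1$ from \eqref{chiyoverh}.

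The case $m=2$ needs no special treatment. The proof of Proposition~\ref{xi} works verbatim there, since $A$ is still an involution and $A^t\mathbf 1=\mathbf 1$. Alternatively, $\xi_k=\sum_I\tau_I c_I[X_k]=\mathrm{Td}[X_k]=1$ directly by Proposition~\ref{topologicalgamma}.
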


\subsection{Stiefel--Whitney numbers}

In this subsection, we turn to studying the canonical real toric manifolds. We will find a general formula for their Stiefel--Whitney numbers and deduce a combinatorial criterion for when the manifold $X^{\R}_\sK$ bords. The calculations become simpler in the mod 2 coefficients. Denote by $u=x_i+x'_i \mod{2}, \ \ i=1,\ldots,m$. The total Stiefel--Whitney class is given by
\begin{equation}\label{totalsw} 
w(X_\sK^\mathbb{R})=\prod_{i=1}^m(1+x_i+x'_i)=(1+u)^m.
\end{equation} Therefore, for a partition $I=(i_1,\ldots,i_p)\vdash m-1$ we have
\[w_I[X_\sK^\mathbb{R}]={m\choose i_1}\cdots{m\choose i_p}u^{m-1}[X_\sK^\mathbb{R}],\] which, together with 
\[u^{m-1}=\prod_{i=1}^{m-1}(x_i+x'_i)=\sum_{\sigma\in \sK\setminus m,\sigma\cup\{m\}\notin \sK}x_\sigma x'_{\sigma^c\setminus\{m\}},\] gives the following formula for each Stiefel--Whitney number of the smooth $(m-1)$-dimensional canonical real toric manifold: 
\[
w_I[X_\sK^\mathbb{R}]=(1+\chi(\sK))\prod_{t=1}^p{m\choose i_t}\mod{2}.
\]
This solves the problem of determining the bordism classes of real canonical toric manifolds.

\begin{theorem}
Let $X^\R_\sK$ be the canonical real toric manifold of a simplicial complex $\sK\neq\Delta_{[m]}$ on $[m]$ with $m\geq 2$. Then $X^\R_\sK$ bounds a smooth real $m$-dimensional manifold if and only if one of the following conditions holds:
\begin{itemize}
\item $m$ is even;
\item $m$ is odd and $\chi(\sK)$ is odd.
\end{itemize}
Otherwise, the canonical real toric manifold $X^\R_\sK$ is nonorientedly bordant to the projective space $\R P^{m-1}$. 
In addition, all oriented canonical real toric manifolds are orientedly null-bordant.
\end{theorem}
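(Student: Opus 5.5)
The plan is to argue entirely through the Stiefel--Whitney number formula derived just above,
\[
w_I[X_\sK^\R]=(1+\chi(\sK))\prod_{t=1}^p{m\choose i_t}\mod 2,\qquad I=(i_1,\ldots,i_p)\vdash m-1,
\]
combined with Thom's theorem that a closed manifold bounds iff all its Stiefel--Whitney numbers vanish, and that two closed manifolds are unorientedly bordant iff their Stiefel--Whitney numbers agree. The key observation is that this formula has the shape $(1+\chi(\sK))\,w_I[\R P^{m-1}]$. Indeed, taking $\sK=\varnothing_{[m]}$ (so $\chi(\sK)=0$, with the convention used throughout that $\chi$ counts the empty face, $\chi(\varnothing_{[m]})=f_{-1}-\ldots$ reduced appropriately) gives $X^\R_\sK=\R P^{m-1}$, whose total Stiefel--Whitney class is $(1+u)^m$, so $w_I[\R P^{m-1}]=\prod_t{m\choose i_t}$. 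I would verify this normalization of $\chi$ against the example $\sK=\varnothing_{[m]}$ first, since the parity criterion hinges on it.

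For the case analysis: if $\chi(\sK)$ is odd, all numbers vanish and $X^\R_\sK$ bounds. If $\chi(\sK)$ is even, $X^\R_\sK$ is bordant to $\R P^{m-1}$. It then remains to recall the classical fact that $\R P^{m-1}$ bounds iff $m$ is even. I would check this directly from the formula: for $m$ even, each $w_I$ involves some $i_t$, and since $\sum i_t=m-1$ is odd, some $i_t$ is odd. I would aim to show $\prod{m\choose i_t}$ is even. The simplest route is to note $w_1=mu=0$, so $X$ is orientable. Together with $\sum i_t$ odd, I would then use the standard Wall/Thom result that $w_I$ of $\R P^{2n-1}$ vanish, or argue via Lucas' theorem: ${m\choose i}$ is odd only if the binary digits of $i$ are contained in those of $m$. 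Since $m$ is even, this forces every such $i$ to be even, contradicting oddness of $\sum i_t$. For $m$ odd, $w_{m-1}[\R P^{m-1}]={m\choose m-1}=m\equiv1$, so it does not bound. This yields exactly the stated dichotomy.

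For the oriented statement: orientability of $X^\R_\sK$ is equivalent to $w_1=mu=0$. The image of $u$ is nonzero in $H^1$, since $u^{m-1}[X]$ is a count of appropriate faces and $u$ is a nontrivial degree-one class. So $X^\R_\sK$ is orientable exactly when $m$ is even. The oriented bordism class is detected by Pontryagin numbers and Stiefel--Whitney numbers (Wall). The Stiefel--Whitney numbers vanish by the above. For the Pontryagin numbers, the total Pontryagin class is $\prod(1+(x_i-x_i')^2)$, rationally a power of a degree-two class. Its top-degree evaluations are nonzero only when $\dim=m-1$ is divisible by $4$, which is impossible for $m$ even since then $m-1$ is odd. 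Hence all Pontryagin numbers vanish, and with Wall's theorem (no odd torsion beyond detection by these invariants) the manifold is orientedly null-bordant.

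The main obstacle I expect is this last step: making the Pontryagin/torsion argument rigorous. I would rely on Wall's theorem that $\Omega^{SO}_*$ classes are determined by Stiefel--Whitney and Pontryagin numbers, together with the dimension parity.
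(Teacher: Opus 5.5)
Your proposal is correct and is essentially the paper's proof: you factor $w_I[X^\R_\sK]=(1+\chi(\sK))\,w_I[\R P^{m-1}]$, invoke Thom's theorem, and for the oriented part combine $w_1=mu$ with the vanishing of all Stiefel--Whitney numbers and the dimensional vanishing of Pontryagin numbers (Wall). Your use of $w_{m-1}[X^\R_\sK]\equiv(1+\chi(\sK))\,m$ for odd $m$ in place of the paper's $w_{2^{i_1}}\cdots w_{2^{i_p}}$, and your Lucas argument for even $m$, are only slightly more explicit versions of the same steps.

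There are a few minor slips:
(i) $\chi$ here is the unreduced Euler characteristic. It does not count the empty face, which is exactly why $\chi(\varnothing_{[m]})=0$.
(ii) $u^{m-1}[X^\R_\sK]\equiv 1+\chi(\sK)$ cannot certify $u\neq 0$ when $\chi(\sK)$ is odd. Argue instead that, for $\sK$ without ghost vertices, $u=x_m+x'_m$ is not in the span of the primed ghost variables and the relations $x_i+x'_i+x_m+x'_m$ $(i<m)$, since any nontrivial combination of these relations contains some $x_i$ with $i<m$.
(iii) The Pontryagin class you quote is that of the complex manifold $X_\sK$, not of $X^\R_\sK$. This is harmless, because only the odd dimension $m-1$ is used.
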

\begin{proof} For $m$ being even, all Stiefel--Whitney numbers vanish $w_I[X_{\sK}^\mathbb{R}]=0$. For $m$ being odd, from the binary expansion $m=2^{i_1}+\cdots+2^{i_p}+1$, for the corresponding Stiefel--Whitney number, we have
\[
w_{2^{i_1}}\cdots w_{2^{i_p}}[X_\sK^\mathbb{R}]=(1+\chi(\sK))\mod{2}.
\] 
In the case where $m$ is odd and $\chi(\sK)$ is even, we see that all Stiefel--Whitney numbers of $X_\sK^\mathbb{R}$ and $\mathbb{R}P^{m-1}$ coincide.

The manifold $X_\sK^\mathbb{R}$ is orientable if and only if $w_1(X_{\sK}^\mathbb{R})=mu=0$, that is, when $m$ is even. In that case, all Stiefel--Whitney numbers and, by dimensionality reason, all Pontryagin numbers vanish. 
\end{proof}

\subsection{The Immersion problem}

According to $(\ref{totalsw})$ the dual Stiefel--Whitney classes of the canonical real toric manifold $X_\sK^\mathbb{R}$ are obtained from
\[
\overline{w}(X_\sK^\mathbb{R})=(1+u)^{-m},
\] 
which gives 
\[
\overline{w}_k(X_\sK^\mathbb{R})={m+k-1\choose k}u^k, \ \ k=1,\ldots,m-1.
\] 

\begin{theorem}\label{thhighest}
 If the canonical real toric manifold $X_\sK^\mathbb{R}$ of a simplicial complex $\sK\neq\Delta_{[m]}$ on $[m]$ with $2^{p-1}<m\leq 2^p$ is immersed in the Euclidean space $\mathbb{R}^N$, then
 \[
 2^p-1\leq N.
 \]
Similarly, if the canonical toric manifold $X_\sK$ of a simplicial complex $\sK\neq\Delta_{[m]}$ on $[m]$ with $2^{p-1}<m\leq 2^p$ is immersed in the Euclidean space $\mathbb{R}^N$, then
\[
2(2^p-1)\leq N.
\]
\end{theorem}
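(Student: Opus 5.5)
The plan is to use the classical obstruction to immersions given by the dual Stiefel--Whitney classes. If an $n$-manifold $M$ immerses in $\mathbb{R}^N$, its normal bundle $\nu$ has rank $N-n$ and $w(\nu)=\overline{w}(M)$. Hence $\overline{w}_k(M)=0$ for $k>N-n$. So it suffices to exhibit one nonzero dual class $\overline{w}_k$ of large degree $k$.

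For $X^\R_\sK$ we have $n=m-1$ and, from the computation above, $\overline{w}(X_\sK^\R)=(1+u)^{-m}$ with $\overline{w}_k={m+k-1\choose k}u^k$. Take $k=2^p-m$, which satisfies $0\le k<m-1$ when $2^{p-1}<m\le 2^p$ (except in degenerate small cases, handled separately). Mod $2$, $(1+u)^{-m}=(1+u)^{2^p-m}(1+u)^{-2^p}$, and $(1+u)^{-2^p}=(1+u^{2^p})^{-1}\equiv 1$ in degrees $\le m-1<2^p$. Thus $\overline{w}(X_\sK^\R)=(1+u)^{2^p-m}$, whose top coefficient ${2^p-m\choose 2^p-m}=1$ gives $\overline{w}_{2^p-m}=u^{2^p-m}$.

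It remains to show $u^{2^p-m}\neq 0$ in $H^*(X_\sK^\R;\Z_2)$. I expect this to be the main point. From the subsection on Stiefel--Whitney numbers, $u^{m-1}[X^\R_\sK]=1+\chi(\sK)$ mod $2$, which may vanish, so I will not rely on it. Instead, use a face argument as in Lemma~\ref{face}. Pick a facet $S$ of $\sK$ (possibly $S=\varnothing$ if $\sK$ is void). Then $x_S u^{m-1-|S|}$ evaluates to $1$, by the same expansion as in the proof of Lemma~\ref{face} with $\mathrm{link}_\sK S=\varnothing$, reduced mod $2$. Hence $u^{m-1-|S|}\ne0$, so $u^j\neq0$ for all $j\le m-1-|S|$.

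We therefore need a face $S$ with $m-1-|S|\ge 2^p-m$, i.e.\ $|S|\le 2m-1-2^p$, and $S$ must be a facet. This is the delicate step. If no such facet exists, switch to the dual complex $\sK^\vee$ via the identification $X_\sK\cong X_{\sK^\vee}$ (whose cohomology isomorphism is the identity), using $x'$-monomials. Failing that, use a smaller-dimensional face $S$ together with a vertex $i_0$, as in Lemma~\ref{face}. There one gets $x_Su^{m-1-|S|}[X]=1-\chi(\mathrm{link}_\sK S)$ mod $2$, and any face whose link has even Euler characteristic suffices. A cleaner route is the monomial $x_\sigma x'_\tau$ of a facet $\sigma\sqcup\tau'$ of $\Bier(\sK)$ paired against $u$-powers. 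Since $u=x_i+x'_i$ and $x_ix'_i=0$, multiplying by $u$ appropriately shows that $u^j$ times a suitable degree-$(m-1-j)$ monomial equals a facet monomial. This gives $u^j\neq0$ for every $j\le m-1$. With this, $\overline{w}_{2^p-m}\neq 0$, so $N-(m-1)\ge 2^p-m$, i.e.\ $N\ge 2^p-1$.

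For the complex manifold $X_\sK$ of real dimension $2(m-1)$, the same argument is run mod $2$. The Stiefel--Whitney classes are the reductions of the Chern classes, so $w(X_\sK)=\prod(1+x_i+x'_i)\equiv(1+u)^m$ with $u$ now in degree $2$. The same computation gives $\overline{w}_{2(2^p-m)}=u^{2^p-m}\ne0$. The immersion condition then yields $N-2(m-1)\ge 2(2^p-m)$, that is, $N\ge 2(2^p-1)$.
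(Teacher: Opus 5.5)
Your reduction is the same as the paper's. Since $\overline{w}(X^\R_\sK)=(1+u)^{-m}\equiv(1+u)^{2^p-m}$ in degrees $\le m-1$, the class $\overline{w}_{2^p-m}=u^{2^p-m}$ is the only candidate for a top nonzero dual class; your $(1+u)^{2^p}$ trick is a clean way to see this. Everything therefore hinges on $u^{2^p-m}\neq 0$, and the paper simply asserts ``$u^k\neq 0$ for $k<m$''. This is exactly where your proposal has a genuine gap.

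Your ``cleaner route'' cannot be right. For $j=m-1$ it would give $u^{m-1}[X^\R_\sK]=1$, which contradicts your own correct remark that $u^{m-1}[X^\R_\sK]=1+\chi(\sK)$ vanishes for odd $\chi(\sK)$. Note also that $u\,x_i=x_i^2$, so multiplying by $u$ does not carry one facet monomial to another.

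Your facet, dual-facet and Lemma~\ref{face} routes are valid but only conditional. They give $u^{2^p-m}\neq 0$ when there is a face $S\in\sK$ with $|S|\le 2m-1-2^p$ and $\chi(\mathrm{link}_\sK S)$ even (for instance $S=\varnothing$ when $\chi(\sK)$ is even), or an analogous face of $\sK^\vee$. Nothing guarantees that such a face exists.

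In fact no argument can close this gap, because both the step and the statement are false. Take $m=5$ (so $p=3$) and $\sK=\Delta_{[3]}\ast\partial\Delta_{\{4,5\}}$, which has no ghost vertices.
\begin{itemize}
\item Here $\sK^\vee=\Delta_{\{1',2',3'\}}$, so $x'_4=x'_5=0$. Hence $u=x_4=x_5$ and $u^2=x_4x_5=0$, because $\{4,5\}\notin\sK$. This is consistent with Lemma~\ref{face}: $\sK$ and all its vertex links are cones, so every relevant Euler characteristic is odd.
\item Consequently $\overline{w}(X^\R_\sK)=1+u$ and $\overline{w}_3=0$.
\item Geometrically, the real moment-angle description gives $X^\R_\sK\cong (T^3\times S^1)/\Z_2$, where the generator acts by a reflection on each circle of $T^3$ and antipodally on $S^1$.
\item This flat manifold has $TX\cong 3L\oplus\varepsilon$, with $L$ pulled back from the M\"obius bundle over $S^1/\Z_2$. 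Since $2L$ is trivial, $TX\oplus L\cong\varepsilon^5$.
\item By Hirsch's theorem, $X^\R_\sK$ immerses in $\R^5$, whereas the statement demands $N\ge 7$.
\end{itemize}
With a ghost vertex the failure is even simpler. For $\sK=\Delta_{[4]}$ on $[5]$ one gets $u=0$ and $X^\R_\sK=T^4\subset\R^5$. Also $X_\sK=(\C P^1)^4$ immerses in $\R^9$, although the complex bound claims $N\ge 14$.

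So your argument, like the paper's, proves the bound only under an extra hypothesis that forces $u^{2^p-m}\neq 0$, such as the existence of a face $S$ as above. The same caveat applies verbatim to the complex case, where $u^2=0$ in $H^4(X_\sK;\Z_2)$ in the example above.
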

\begin{proof} Let $k_{\mathrm{max}}=\max\big\{k<m:{m+k-1\choose m-1}=1\mod{2}\big\}$, which is exactly the bitwise complement of $m-1$ in the $p$ digits. Since $u^k\neq 0$ for $k<m$, we have \[k_{\mathrm{max}}=\max\{k:\overline{w}_k(X_\sK^\mathbb{R})\neq 0\}=(2^p-1)-(m-1).\]
In the complex case, the proof goes the same way. Since $w(X_\sK)=c(X_\sK) \mod{2}$, we have 
\[
w(X_\sK)=(1+u)^m,
\] 
where $u=x_i+x'_i \mod{2}, \ i=1,\ldots,m$ is the $2$-dimensional class $u\in H^2(X_\sK,\mathbb{Z}_2)$.
\end{proof}

As a corollary, we obtain a large class of real toric manifolds with the highest dimension of Euclidian space required for immersions. 

\begin{corollary}\label{highest}
For $m=2^{p-1}+1$, we have that $X_\sK^\mathbb{R}$ cannot be immersed in $\mathbb{R}^{2m-4}.$
\end{corollary}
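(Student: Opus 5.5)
Corollary~\ref{highest} follows from Theorem~\ref{thhighest} by specializing $m=2^{p-1}+1$. This $m$ satisfies $2^{p-1}<m\leq 2^p$ (for $p\geq 1$), so the theorem applies with this $p$.

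First I compute the top nonvanishing dual Stiefel--Whitney class. Here $m-1=2^{p-1}$, whose binary complement in $p$ digits is $2^p-1-2^{p-1}=2^{p-1}-1=m-2$. By the formula $\overline{w}_k(X_\sK^\R)=\binom{m+k-1}{k}u^k$ and the proof of Theorem~\ref{thhighest}, we get $k_{\max}=m-2$. Explicitly,
\[
\binom{2^{p-1}+k}{k}\equiv 1 \pmod 2 \quad\text{for } k<2^{p-1},
\]
by Lucas' theorem, since the binary digits of $k$ do not overlap the single bit $2^{p-1}$. Also $u^{m-2}\neq 0$, because $u^{m-1}[X_\sK^\R]$ is computed as a sum over faces, and the claim in the theorem's proof gives $u^k\neq0$ for all $k<m$. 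Hence
\[
\overline{w}_{m-2}(X_\sK^\R)\neq 0.
\]

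Next I apply the standard Whitney obstruction. If an $n$-manifold immerses in $\R^{n+r}$, its normal bundle has rank $r$, so $\overline{w}_k=0$ for $k>r$. Here $n=m-1$, so an immersion into $\R^{N}$ forces $N-(m-1)\geq m-2$, that is, $N\geq 2m-3$. Therefore $X_\sK^\R$ cannot immerse into $\R^{2m-4}$. As a consistency check, $2m-3=2^p-1$, which agrees with the bound of Theorem~\ref{thhighest}.

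The main point requiring care is the nonvanishing of $u^{m-2}$ in $H^*(X_\sK^\R;\Z_2)$. I would justify it by multiplying by $u$ to reach $u^{m-1}$, but $u^{m-1}[X_\sK^\R]=1+\chi(\sK)$ may vanish mod $2$. So instead I would argue directly: $u^{m-2}=\prod_{i=1}^{m-2}(x_i+x'_i)$ pairs nontrivially with some class $x_j$, and expanding the product gives a sum of monomials $x_\sigma x'_{\tau}$. Alternatively, I would cite the earlier assertion that $u^k\neq 0$ for $k<m$, used in Theorem~\ref{thhighest}.
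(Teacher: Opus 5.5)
\textbf{There is a genuine gap, at the step you flagged: $u^{m-2}\neq 0$ is false in general, and so is the corollary as stated.}

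Your route is the paper's own. The corollary is Theorem~\ref{thhighest} with $2^p-1=2m-3$, and the proof of that theorem is exactly your computation $k_{\max}=m-2$ followed by Whitney duality. Your Lucas-theorem computation and the arithmetic are correct. Neither of your proposed fixes works. The paper's ``$u^k\neq 0$ for $k<m$'' is asserted without proof and is false. The pairing with some $x_j$ can vanish for every $j$. To see this, note that $x'_i=x_i+u$, so $H^1(X^\R_\sK;\Z_2)$ is spanned by $x_1,\dots,x_m,u$. The mod~$2$ version of Lemma~\ref{face} gives $u^{m-1}[X^\R_\sK]\equiv 1+\chi(\sK)$ and $x_ju^{m-2}[X^\R_\sK]\equiv 1+\chi(\mathrm{link}_\sK\{j\})$. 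By Poincar\'e duality, $u^{m-2}\neq 0$ if and only if $\chi(\sK)$ or some vertex link has even Euler characteristic. This fails, for instance, for every cone $v\ast L$ with $\chi(L)$ odd: then $\chi(\sK)=1$, every non-apex vertex link is a cone, and the apex link is $L$. More concretely, for $\sK_j=\Delta_{[j]}\ast\partial\Delta_{[m-j]}$ with $j\le m-2$, the vertices $(j+1)',\dots,m'$ are ghosts of $\sK^\vee$. Hence $u=x_{j+1}=\dots=x_m$ and $u^{m-j}=x_{j+1}\cdots x_m=0$, so $\overline{w}_{m-2}=0$ whenever $j\geq 2$.

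The statement itself is false in this generality. Take $m=5$ and $\sK=\Delta_{\{1,2,3\}}\ast\partial\Delta_{\{4,5\}}$, which has no ghost vertices. Then $u^2=x_4x_5=0$, so $\overline{w}(X^\R_\sK)=(1+u)^{-5}=1+u$, and the obstruction only excludes $\R^4$. The fan has rays $\pm e_1,\pm e_2,\pm e_3,-e_4,e_1+e_2+e_3+e_4$. The quotient construction identifies $X^\R_\sK$ with $S^1\times_{\Z_2}T^3$, where $\Z_2$ acts antipodally on $S^1$ and by reflection on each circle factor. This is a flat $4$-manifold with holonomy $\mathrm{diag}(1,-1,-1,-1)$, so $TX\cong\R\oplus 3\lambda$, where $\lambda$ is the pullback of the M\"obius bundle along $X\to S^1/\pm$. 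Then $TX\oplus\lambda\cong\R\oplus 4\lambda$ is trivial, since $4\lambda$ is pulled back from an orientable bundle over a circle. By Hirsch's theorem, $X^\R_\sK$ immerses in $\R^5\subset\R^6=\R^{2m-4}$. With a ghost vertex the failure is even simpler: $\sK=\Delta_{[4]}$ on $[5]$ has $u=0$ and $X^\R_\sK=T^4$, which embeds in $\R^5$.

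What your argument, and the paper's, actually proves is the corollary under the extra hypothesis $u^{m-2}\neq 0$. This holds, for example, whenever $\chi(\sK)$ is even, which covers $\partial\Delta_{[m]}$ and hence $\R P^{m-1}$.
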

Note that for $\sK=\partial\Delta_{[m]}$ with $m=2^{p-1}+1$ we have $X_\sK^\mathbb{R}=\mathbb{R}P^{2^{p-1}}$ and Corollary $\ref{highest}$ generalizes the standard fact about immersions of real projective spaces, see~\cite[Theorem 4.8]{MS}. 

The Cohen theorem \cite{Cohen} resolving the immersion conjecture states that the best upper bound for the dimension of a Euclidean space where a smooth, compact $n$-manifold ($n>1$) can be immersed is $2n-\alpha(n)$, where $\alpha(n)$ is the number of 1's in the binary expansion of $n$. We construct a family of real toric manifolds of arbitrary dimensions that achieve these estimates. Besides the classical example of products of real projective spaces, the corresponding examples among small covers are constructed in \cite{BG} and among real Bott manifolds in \cite{DD}. 

\begin{theorem}
For each $n>1$, there is a real toric manifold $X^\mathbb{R}_\Sigma$ of dimension $n$ such that the smallest dimension of a Euclidean space where it can be immersed is $2n-\alpha(n)$. 
\end{theorem}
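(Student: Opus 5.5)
We will build the required manifolds as products of canonical real toric manifolds, for which the lower bound of Theorem~\ref{thhighest} is sharp. Write $n=2^{p_1}+2^{p_2}+\cdots+2^{p_r}$ in binary, with $p_1>p_2>\cdots>p_r\geq 0$, so that $r=\alpha(n)$. For each $t$ put $m_t=2^{p_t}+1$. Choose any $\sK_t\neq\Delta_{[m_t]}$; the simplest choice is $\sK_t=\partial\Delta_{[m_t]}$, but any complex works, since only the dual Stiefel--Whitney class enters. Set
$$X^{\R}_\Sigma:=X^{\R}_{\sK_1}\times\cdots\times X^{\R}_{\sK_r}.$$
This is the real toric manifold of the product fan $\Sigma=\Sigma_{\sK_1}\times\cdots\times\Sigma_{\sK_r}$, which is complete and regular. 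Its dimension is $\sum_t(m_t-1)=n$.

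The lower bound comes from the top dual Stiefel--Whitney class. The Whitney product formula gives $\overline w(X^{\R}_\Sigma)=\prod_t(1+u_t)^{-m_t}$, where $u_t\in H^1(X^{\R}_{\sK_t};\Z_2)$ is the class from the mod~2 computations. By the K\"unneth formula, the top nonzero dual class is the product of the top nonzero dual classes of the factors. In the proof of Theorem~\ref{thhighest}, with $m_t=2^{p_t}+1$ we have $2^{p_t}<m_t\leq 2^{p_t+1}$, so
$$k_{\max}(t)=(2^{p_t+1}-1)-(m_t-1)=2^{p_t}-1,$$
and $\overline w_{k_{\max}(t)}$ is the nonzero class $u_t^{2^{p_t}-1}$. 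The product $\prod_t u_t^{2^{p_t}-1}$ is nonzero in the tensor product, so $\overline w_k(X^{\R}_\Sigma)\neq0$ for
$$k=\sum_t\bigl(2^{p_t}-1\bigr)=n-\alpha(n).$$
By the standard Whitney duality argument (a normal bundle of rank $N-n$ forces $\overline w_k=0$ for $k>N-n$), an immersion into $\R^N$ requires $N\geq n+(n-\alpha(n))=2n-\alpha(n)$.

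The upper bound is Cohen's theorem~\cite{Cohen}: every closed smooth $n$-manifold with $n>1$ immerses in $\R^{2n-\alpha(n)}$. Combining the two bounds, the minimal dimension is exactly $2n-\alpha(n)$.

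The main points to check are these.
\begin{itemize}
\item The product of the canonical fans is a complete regular fan, and the real locus of a product of toric varieties is the product of the real loci.
\item The K\"unneth nonvanishing holds; this uses $u_t^{m_t-1}\neq0$, established in the proof of Theorem~\ref{thhighest} through Lemma~\ref{face}-type evaluations.
\item The case $p_t=0$, i.e.\ $m_t=2$, where the factor is $\R P^1$ and contributes exponent $0$.
\end{itemize}
Note that $\alpha(n)=1$ occurs exactly when $n=2^p$; then the construction reduces to Corollary~\ref{highest}, which gives the bound $2n-1$.
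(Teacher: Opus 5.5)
Your construction and argument follow the paper's own proof. You take a product of canonical real toric manifolds on $2^{p_t}+1$ vertices, get the lower bound from the top dual Stiefel--Whitney class via Theorem~\ref{thhighest}, and use Cohen's theorem for the upper bound. Your K\"unneth and Whitney-duality details make explicit what the paper leaves implicit.

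There is, however, a real error in the claim that \emph{any} $\sK_t\neq\Delta_{[m_t]}$ works (the paper claims the same), and in the nonvanishing you cite for it. What you actually need is $u_t^{m_t-2}=u_t^{2^{p_t}-1}\neq 0$, and this is not automatic.
\begin{itemize}
\item Reducing Lemma~\ref{face} mod $2$ gives $u^{m-1}[X^\R_\sK]\equiv 1+\chi(\sK)$ and $x_iu^{m-2}[X^\R_\sK]\equiv 1+\chi(\mathrm{link}_\sK\{i\})$ for each vertex $i$.
\item $H^1(X^\R_\sK;\Z_2)$ is spanned by $x_1,\ldots,x_m,u$.
\item So by Poincar\'e duality, $u^{m-2}=0$ whenever $\chi(\sK)$ and all $\chi(\mathrm{link}_\sK\{i\})$ are odd.
\end{itemize}
For $\sK=\langle 123,134,145\rangle$ on $[5]$, neither $\sK$ nor $\sK^\vee$ has ghost vertices, and all these Euler characteristics equal $1$. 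Hence $u^3=0$ and $\overline w_3(X^\R_\sK)=0$. The Stiefel--Whitney argument then only gives $N\geq 6$, not $7$.

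A more drastic case: $\sK=\Delta_{[4]}$ viewed on $[5]$ gives $X^\R_\sK\cong(\R P^1)^4=T^4$. This manifold immerses in $\R^5$, so for $n=4$ the conclusion itself fails with that choice. The assertion ``$u^k\neq0$ for $k<m$'' in the proof of Theorem~\ref{thhighest} is also wrong in general. Already $u^{m-1}=0$ whenever $\chi(\sK)$ is odd, consistent with the paper's own formula $w_I[X^\R_\sK]=(1+\chi(\sK))\prod_t\binom{m}{i_t}$.

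Since the theorem is existential, the repair is to commit to a good choice. Take $\sK_t$ with $\chi(\sK_t)$ even, for instance $\sK_t=\partial\Delta_{[m_t]}$, whose Euler characteristic is $0$ for $m_t$ odd and $2$ for $m_t=2$.
\begin{itemize}
\item Then $u_t^{m_t-1}[X^\R_{\sK_t}]=1$, hence $u_t^{2^{p_t}-1}\neq 0$.
\item Since $\binom{2^{p_t+1}-1}{2^{p_t}-1}$ is odd, $\overline w_{2^{p_t}-1}(X^\R_{\sK_t})\neq 0$.
\end{itemize}
With $\sK_t=\partial\Delta_{[m_t]}$ your manifold is the classical $\prod_t\R P^{2^{p_t}}$. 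With that choice fixed, the rest of your proof (K\"unneth over $\Z_2$, Whitney duality, Cohen's theorem) goes through as written.
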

\begin{proof} 
Let $m-1=2^{i_1}+\cdots+2^{i_t}, \ i_1>\ldots> i_t\geq0$ be the binary expansion of an integer $m>2$. Take arbitrary simplicial complexes $K_j, \ j=1,\ldots,t$ on $2^{i_j}+1, \ j=1,\ldots,t$ vertices, respectively, such that $K_j\neq\Delta_{[2^{i_j}+1]}, \ j=1,\ldots,m$. Then the real toric manifold for the fan $\Sigma=\Sigma_{K_1}\times\cdots\times\Sigma_{K_t}$ is the product of canonical real toric manifolds
\[X_\Sigma^\mathbb{R}=X_{K_1}^\mathbb{R}\times\cdots\times X_{K_t}^\mathbb{R}.\] If $X_\Sigma^\mathbb{R}$ is immersed in $\mathbb{R}^N$, then by Theorem \ref{thhighest} it has to be
\[
N\geq2n-\alpha(n), \  n=m-1,
\]
which finishes the proof. \end{proof}

\subsection{Pontryagin numbers} In order to describe the oriented bordism classes of canonical toric manifolds, we are going to introduce a general formula for their Pontryagin numbers. The total Pontryagin class of the canonical toric manifold $X_\sK$ depends only on the class $u$ 
\[
p(X_\sK)=\prod_{i=1}^m(1+x_i^2+{x'_i}^2)=(1+u^2)^m.
\] 
Hence, 
\[
p_k={m \choose k}u^{2k}, 1\leq k\leq\lfloor\frac{m-1}{2}\rfloor.
\] 
For an odd $m=2n+1$, the Pontryagin numbers $X_{\sK}$ are determined by
\[
p_I[X_\sK]=\prod_{t=1}^p{m \choose i_t}u^{2n}[X_{\sK}], \ I=(i_1,\ldots,i_p)\vdash n,
\] 
which by Lemma \ref{face} gives
\begin{equation}\label{pontryagin} p_I[X_{\sK}]=(1-\chi(\sK))\prod_{t=1}^p{m\choose i_t} \ I=(i_1,\ldots,i_p)\vdash n.
\end{equation}
This provides a topological proof for the signature formula of $X_{\sK}$, and reproves the formula $(\ref{chiysign})$.

\begin{proposition}
The signature of the toric manifold $X_{\sK}$ associated with a simplicial complex $\sK\neq\Delta_{[m]}$ on $[m]$ with $m\geq 2$ being odd equals
\[
\mathrm{sign}(X_{\sK})=1-\chi(\sK).
\]
\end{proposition}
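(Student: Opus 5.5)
The plan is to apply Hirzebruch's signature theorem, $\mathrm{sign}(X_\sK)=L_n[X_\sK]$, where $m=2n+1$, so that $\dim_\R X_\sK=4n$. The $L$-genus is multiplicative, with characteristic power series $\sqrt{z}/\tanh\sqrt{z}$ in the variable $z=y^2$ for Chern roots $y$. The total Pontryagin class is $p(X_\sK)=(1+u^2)^m$, exactly as for $\C P^{2n}$, where $p=(1+h^2)^{2n+1}$. So the $L$-class of $X_\sK$ is obtained from that of $\C P^{2n}$ by substituting $u$ for the hyperplane class $h$. More precisely, $L_n$ is a universal polynomial in $p_1,\dots,p_n$. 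Since each $p_k=\binom{m}{k}u^{2k}$, we get
\[
L_n(X_\sK)=\lambda_n\,u^{2n},
\]
where $\lambda_n$ is the coefficient determined by $L_n(\C P^{2n})=\lambda_n h^{2n}$.

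The next step is to fix $\lambda_n$. We know $\mathrm{sign}(\C P^{2n})=1$ and $h^{2n}[\C P^{2n}]=1$, so $\lambda_n=1$. Equivalently, by formula~(\ref{pontryagin}) each Pontryagin number of $X_\sK$ equals $(1-\chi(\sK))$ times the corresponding Pontryagin number $\prod_t\binom{m}{i_t}$ of $\C P^{2n}$. The signature is a linear combination of Pontryagin numbers with universal rational coefficients, hence
\[
\mathrm{sign}(X_\sK)=(1-\chi(\sK))\,\mathrm{sign}(\C P^{2n})=1-\chi(\sK).
\]
Put differently, $X_\sK$ and $(1-\chi(\sK))[\C P^{2n}]$ have the same Pontryagin numbers and thus agree rationally in $\Omega^{SO}_*$.

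The only step that needs care is the evaluation $u^{2n}[X_\sK]=1-\chi(\sK)$. This comes from Lemma~\ref{face} with $S=\varnothing$, using the convention $\mathrm{link}_\sK\varnothing=\sK$, and it has already been used in (\ref{pontryagin}). One should also check the orientation conventions: the fundamental class of $X_\sK$ is the one coming from the complex structure, and its signature is computed with respect to that class. As a consistency check, the result agrees with (\ref{chiysign}) together with the Remark computing $\alpha_0(\sK)=1-\chi(\sK)$ from (\ref{alphaExplicitFormula}) at $k=0$, namely $\alpha_0=\sum_j(-1)^jf_{j-1}=1-\chi(\sK)$.
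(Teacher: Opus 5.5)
Your proof is correct and follows essentially the same route as the paper. You write $L_n(p_1,\dots,p_n)$ with $p_k=\binom{m}{k}u^{2k}$ as a universal multiple of $u^{2n}$, normalize that multiple to $1$ by comparison with $\C P^{2n}$, and evaluate $u^{2n}[X_\sK]=1-\chi(\sK)$ via Lemma~\ref{face} with $S=\varnothing$. The only cosmetic difference is that the paper normalizes through an auxiliary complex $\tilde\sK$ with $\chi(\tilde\sK)=0$ (whose Pontryagin numbers match those of $\C P^{2n}$), whereas you compare with $\C P^{2n}$ directly, which is slightly cleaner.
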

\begin{proof} 
The signature of an oriented manifold $X$ of real dimension $4n$ is equal to the $L$-genus 
\[
\mathrm{sign}(X)=L_n(p_1,\ldots,p_n)[X].
\] 
Take a simplicial complex $\tilde{\sK}$ with $\chi(\tilde{\sK})=0$ on $[m]$ with $m=2n+1$. The toric manifold $X_{\tilde{\sK}}$ and the projective space $\mathbb{C}P^{2n}$ have the same Pontryagin numbers by $(\ref{pontryagin})$. Hence, 
\[
L_n({2n+1\choose 1}u^2,{2n+1\choose 2}u^4,\ldots,{2n+1\choose n}u^{2n})=u^{2n}.
\] 
Applying to the given complex $\sK$ on $[m]$ with $m=2n+1$, we get
\[
\mathrm{sign}(X_{\sK})=L_n({2n+1 \choose 1}u^2,{2n+1 \choose 2}u^4,\ldots,{2n+1 \choose n}u^{2n})[X_{\sK}]=u^{2n}[X_{\sK}]=1-\chi(\sK),
\] 
which finishes the proof.
\end{proof}

The Stiefel--Whitney classes are topological invariants. Since odd-dimensional Stiefel--Whitney classes of toric manifolds vanish, we write 
\[
w_I=w_{2i_1}\cdots w_{2i_p}=c_I\mod 2, \ I=(i_1,\ldots,i_p)\vdash m-1.
\] 
It follows from Proposition \ref{XbasisProp} that 
\[
w_I[X_j]=w_I[(\mathbb{C}P^1)^j\times\mathbb{C}P^{m-j-1}]\ \  \text{for all} \ \  I\vdash m-1 \ \ \text{and} \ \ j=0,1,\ldots,m-1.
\] 

The explicit calculation gives the following formula.

\begin{lemma}\label{gamma}
For any partition $I=(i_1,\ldots,i_p)\vdash m-1$ and $j=0,1\ldots,m-1$, we have: 
\[
c_I((\mathbb{C}P^1)^j\times\mathbb{C}P^{m-j-1})=2^j\sum_{\substack{(j_1,\ldots,j_p)\vdash j, \\ 0\leq j_t\leq i_t, t=1,\ldots,p}}{j\choose j_1,\ldots,j_p}\prod_{t=1}^p{m-j\choose i_t-j_t}.
\]
\end{lemma}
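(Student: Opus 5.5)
The plan is a direct computation in the cohomology ring of the product, using the standard (product) complex structure; the lemma concerns $(\mathbb{C}P^1)^j\times\mathbb{C}P^{m-j-1}$ as a product of projective spaces, not $X_j$. Write
\[
H^*\big((\mathbb{C}P^1)^j\times\mathbb{C}P^{m-j-1};\Z\big)=\Z[x_1,\ldots,x_j,y]/(x_1^2,\ldots,x_j^2,y^{m-j}),
\]
where $x_1\cdots x_j\,y^{m-j-1}$ evaluates to $1$ on the fundamental class. The total Chern class is multiplicative, so
\[
c=\prod_{t=1}^j(1+2x_t)\,(1+y)^{m-j}.
\]

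First I expand a single Chern class by degree. Since the $x_t$ square to zero,
\[
\prod_{t=1}^{j}(1+2x_t)=\sum_a 2^a e_a(x_1,\ldots,x_j),
\]
where $e_a$ is the elementary symmetric polynomial. Hence
\[
c_i=\sum_{a=0}^{i}2^a\,e_a(x)\,\binom{m-j}{i-a}\,y^{i-a}.
\]

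Next I multiply $c_{i_1}\cdots c_{i_p}$, choosing an index $j_t$ in the $t$-th factor. The resulting monomial is
\[
2^{\sum_t j_t}\,e_{j_1}(x)\cdots e_{j_p}(x)\,\prod_{t=1}^p\binom{m-j}{i_t-j_t}\,y^{\,m-1-\sum_t j_t}.
\]
Pairing with the fundamental class picks out the coefficient of $x_1\cdots x_j\,y^{m-j-1}$. Every monomial containing some $x_t^2$ vanishes, so we need $\sum_t j_t=j$, and the power of $y$ is then automatically $m-1-j$. Each summand therefore carries the factor $2^j$. The constraint $0\le j_t\le i_t$ comes from the range of $a$ in each $c_{i_t}$.

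The only genuinely combinatorial point is the coefficient of the squarefree monomial $x_1\cdots x_j$ in $e_{j_1}(x)\cdots e_{j_p}(x)$ with $\sum_t j_t=j$. This equals the number of ordered set partitions of $[j]$ into blocks of sizes $j_1,\ldots,j_p$, namely the multinomial $\binom{j}{j_1,\ldots,j_p}$.

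Collecting terms gives exactly the stated formula. The one thing to flag in the write-up is that the sum runs over all ordered tuples $(j_1,\ldots,j_p)$ with $\sum_t j_t=j$ and $0\le j_t\le i_t$, i.e. compositions of $j$, some parts possibly zero. The notation $\vdash j$ in the statement should be read in this sense, and I would state this explicitly. I expect no real obstacle; the main care is keeping this indexing convention consistent with the paper's.
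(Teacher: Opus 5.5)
Your proposal is correct and follows the paper's proof: the paper also writes the total Chern class as $\prod_t(1+u_t)^2(1+v)^{m-j}$ (your $(1+2x_t)$ agrees with this modulo $x_t^2$), expands each $c_{i_t}$ through elementary symmetric functions in the $u_t$, and extracts the coefficient of $u_1\cdots u_j v^{m-j-1}$. Your remark that $(j_1,\ldots,j_p)\vdash j$ must be read as ordered compositions of $j$, with zero parts allowed, is right and worth stating explicitly. It is exactly what reproduces $c_1^3[\mathbb{C}P^1\times\mathbb{C}P^2]=2\cdot 3\cdot 9=54$ in the paper's example; reading it as unordered partitions would give $18$.
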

\begin{proof} The total Chern class of the product of projective spaces is 
\[c((\mathbb{C}P^1)^j\times\mathbb{C}P^{m-j-1})=(1+u_1)^2\cdots(1+u_j)^2(1+v)^{m-j},\] where $u_1^2=\cdots=u_j^2=v^{m-j}=0$. Therefore, the Chern class $c_I=c_{i_1}\cdots c_{i_p}$ is determined by
\[c_I=\prod_{t=1}^p\sum_{j_t=0}^{i_t}2^{j_t}\sigma_{j_t}(u_1,\ldots,u_j){m-j\choose i_t-j_t}v^{i_t-j_t}.\] We extract the coefficient by the monomial $u_1\cdots u_jv^{m-j-1}$ to derive the formula.
\end{proof}
\begin{theorem}
Let $\sK\neq\Delta_{[m]}$ be a simplicial complex on $[m]$ with $m\geq 2$. The canonical toric manifold $X_{\sK}$ is orientedly null-bordant if and only if one of the following conditions holds:
\begin{itemize}
    \item $m$ is even;
    \item $m$ is odd and $\chi(\sK)=1$.
\end{itemize}
If $m$ is odd and $\chi(\sK)\neq 1$ the manifold $X_{\sK}$ is nonorientedly null-bordant if and only if $\chi(\sK)$ is odd. 
Additionally, for $m$ being odd, the oriented bordism class of $X_{\sK}$ is determined by 
\[
[X_{\sK}]=(1-\chi(\sK))[\mathbb{C}P^{2n}] \ \text{in} \ \Omega^{SO}_\ast.
\]
\end{theorem}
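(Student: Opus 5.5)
The proof will rest on two classical facts. First, the oriented bordism class of a closed oriented manifold is determined by its Pontryagin numbers together with its Stiefel--Whitney numbers (Wall). Second, $\Omega^{SO}_\ast\otimes\Q$ is detected by Pontryagin numbers, and torsion in $\Omega^{SO}_\ast$ is detected by Stiefel--Whitney numbers. All the needed characteristic numbers of $X_\sK$ will be computed explicitly, using formula~(\ref{pontryagin}) for Pontryagin numbers and the reduction $w_I=c_I \bmod 2$ together with Proposition~\ref{XbasisProp}, Theorem~\ref{additive} and Lemma~\ref{gamma} for Stiefel--Whitney numbers. The comparison manifold throughout will be $\C P^{2n}$ with $m=2n+1$.

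\emph{Even $m$.} Here $\dim_\R X_\sK=2(m-1)$ is not divisible by $4$, so all Pontryagin numbers vanish. For the Stiefel--Whitney numbers, the mod $2$ total class is $w(X_\sK)=(1+u)^m$ with $u\in H^2(X_\sK;\Z_2)$, as in the proof of Theorem~\ref{thhighest}. Hence every Stiefel--Whitney number equals $\prod_t\binom{m}{i_t}\,u^{m-1}[X_\sK]\bmod 2$. I will show $u^{m-1}[X_\sK]\equiv 1-\chi(\sK)\pmod 2$ using Lemma~\ref{face} with $S=\varnothing$ (the sign of $u$ is irrelevant mod $2$). Since $m$ is even, $w_2=mu=0$, and any partition of $m-1$ (which is odd) contains some part $i_t$ with $\binom{m}{i_t}$ even: the product $\prod_t\binom{m}{i_t}$ is odd only when the binary digits of the parts are disjoint and contained in those of $m$, so the parts would sum to a number with last digit $0$, which is impossible for the odd number $m-1$. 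Hence all Stiefel--Whitney numbers vanish, and $X_\sK$ bounds orientedly.

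\emph{Odd $m=2n+1$.} Formula~(\ref{pontryagin}) gives $p_I[X_\sK]=(1-\chi(\sK))\prod_t\binom{m}{i_t}$. These coincide with $(1-\chi(\sK))\,p_I[\C P^{2n}]$, since $p(\C P^{2n})=(1+h^2)^{2n+1}$ and $h^{4n}[\C P^{2n}]=1$. By the same mod $2$ computation, $w_I[X_\sK]=(1-\chi(\sK))\,w_I[\C P^{2n}] \bmod 2$. Thus $X_\sK$ and $(1-\chi(\sK))\C P^{2n}$ have all Pontryagin and Stiefel--Whitney numbers equal, and Wall's theorem gives $[X_\sK]=(1-\chi(\sK))[\C P^{2n}]$ in $\Omega^{SO}_\ast$. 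Since $[\C P^{2n}]$ has signature $1$, it has infinite order, so this class vanishes if and only if $\chi(\sK)=1$. For the unoriented statement, I will reduce via $\Omega^{SO}\to\Omega^O$: the image of $[\C P^{2n}]$ is nonzero, because $w_{2n}[\C P^{2n}]$ is the mod $2$ Euler characteristic $2n+1\equiv 1$. Therefore $X_\sK$ is unoriented null-bordant if and only if $1-\chi(\sK)$ is even, that is, if and only if $\chi(\sK)$ is odd.

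\emph{Main obstacle.} The delicate step is the evaluation $u^{m-1}[X_\sK]=\pm(1-\chi(\sK))$ in the complex setting, where $u=x'_i-x_i$. I will take it from Lemma~\ref{face} applied to the empty face, whose link is $\sK$ itself; this yields the value exactly, with no sign ambiguity for the odd exponent $2n$ needed in~(\ref{pontryagin}). A secondary point is citing Wall's theorem correctly rather than arguing only rationally. As an alternative route, I would express $[X_\sK]=\sum_j\alpha_j(\sK)[X_j]$ via Theorem~\ref{additive}, use that each $X_j$ is diffeomorphic to $(\C P^1)^j\times\C P^{m-j-1}$, which bounds orientedly for $j\ge1$, and then use Corollary~\ref{SumOfAlphaCoro} to obtain the coefficient of $[\C P^{m-1}]$. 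However, this coefficient is $\alpha_0(\sK)=1-\chi(\sK)$ directly from the definition of the $\alpha$-vector.
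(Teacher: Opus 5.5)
Your proposal is correct and follows essentially the paper's argument: Pontryagin numbers from \eqref{pontryagin}, Stiefel--Whitney numbers equal to $(1-\chi(\sK))\prod_t\binom{m}{i_t} \bmod 2$, comparison with $\C P^{2n}$, and the fact that Pontryagin and Stiefel--Whitney numbers together detect oriented bordism. The deviations are minor. You get the Stiefel--Whitney numbers directly from $w(X_\sK)=(1+u)^m$ and $u^{m-1}[X_\sK]=\alpha_0(\sK)$, rather than via $\sum_j\alpha_j(\sK)w_I[X_j]$ and Lemma~\ref{gamma}. You make the even-$m$ parity argument explicit; there only the evenness of each part matters, not disjointness of binary digits. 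Finally, the top class detecting $\C P^{2n}$ in $\Omega^O_\ast$ should be written $w_{4n}$, not $w_{2n}$.
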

\begin{proof} 
Let $w_{2k}(X_\sK)=c_k(X_\sK)\mod{2}, \  k=1,\ldots,m-1$ be the Stiefel--Whitney classes of $X_\sK$. From
\[
w_I[X_{\sK}]=\sum_{j=0}^{m-1}\alpha_j(\sK)w_I[X_j]
\] 
and Lemma~\ref{gamma}, the following identities can be derived:
\begin{equation}\label{stwh} w_I[X_\sK]=\alpha_0(\sK)w_I[\mathbb{C}P^{m-1}]=(1-\chi(\sK))\prod_{t=1}^p{m\choose i_t}\mod{2}, \ I=(i_1,\ldots,i_p)\vdash m-1.
\end{equation} 
The Pontryagin numbers of $X_{\sK}$ vanish $p_I(X_{\sK})=0$ in the case of even $m$ due to the dimensionality reason and for odd $m$ together with $\chi(\sK)=1$ by $(\ref{pontryagin})$. In both cases, we see that all Stiefel--Whitney numbers also vanish $w_I[X_\sK]=0$. Also, for $m$ and $\chi(\sK)\neq 1$ being odd, the Pontryagin numbers are nontrivial, but all Stiefel--Whitney numbers vanish by $(\ref{stwh})$. In the remaining case where $m$ is odd and $\chi(\sK)$ is even, at least one Stiefel--Whitney number is nonzero, since 
\[
w_{2\cdot2^{i_1}}\cdots w_{2\cdot2^{i_p}}[X_\sK]=1
\] 
for the binary expansion $m=2^{i_1}+\cdots+2^{i_p}+1$. 
The last statement follows from formulae $(\ref{pontryagin})$ and $(\ref{stwh})$.
\end{proof}

\begin{corollary}
Let  $\phi\colon\Omega_\ast^{SO}\rightarrow\Omega_\ast^O$ be the orientation forgetting homomorphism. Then, for a simplicial complex $\sK\neq\Delta_{[m]}$ on $[m]$ with $m\geq 2$ being odd and with the odd Euler characteristic $\chi(\sK)\neq 1$, we have $[X_{\sK}]\neq 0$ in $\Omega_\ast^{SO}$ and $\phi([X_{\sK}])=0.$
\end{corollary}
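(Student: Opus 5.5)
The corollary is a direct reading of the preceding theorem on oriented bordism. Its two claims follow from two facts already established there: the Pontryagin formula~$(\ref{pontryagin})$ and the Stiefel--Whitney formula~$(\ref{stwh})$. The key point is that $\phi$ preserves Stiefel--Whitney numbers, while a class in $\Omega^{SO}_\ast$ is detected by its Pontryagin and Stiefel--Whitney numbers together. So we will show that $[X_\sK]$ has a nonzero Pontryagin number, and that all its Stiefel--Whitney numbers vanish.

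For the first claim, write $m=2n+1$. By~$(\ref{pontryagin})$,
\[
p_I[X_\sK]=(1-\chi(\sK))\prod_{t=1}^p{m\choose i_t}\quad\text{for } I\vdash n.
\]
Take $I=(1,\ldots,1)$, that is, $n$ ones. This gives $p_1^n[X_\sK]=(1-\chi(\sK))m^n$. This number is nonzero because $m\ge 3$ and $\chi(\sK)\neq 1$ by hypothesis. Pontryagin numbers are oriented bordism invariants, so $[X_\sK]\neq 0$ in $\Omega^{SO}_\ast$. Equivalently, the last statement of the theorem gives $[X_\sK]=(1-\chi(\sK))[\mathbb{C}P^{2n}]$, and $\mathbb{C}P^{2n}$ has infinite order in $\Omega^{SO}_{4n}$ since its signature is $1$. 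We then use $1-\chi(\sK)\neq 0$.

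For the second claim, recall that the unoriented class $\phi([X_\sK])$ is determined by the Stiefel--Whitney numbers, by Thom's theorem. The odd Stiefel--Whitney classes of a toric manifold vanish, and $w_{2k}=c_k \bmod 2$. By~$(\ref{stwh})$,
\[
w_I[X_\sK]\equiv(1-\chi(\sK))\prod_t{m\choose i_t}\pmod 2.
\]
Since $\chi(\sK)$ is odd, $1-\chi(\sK)$ is even, so every Stiefel--Whitney number with only even-indexed classes vanishes. Any Stiefel--Whitney monomial involving an odd-indexed class is zero at the level of classes. Hence all Stiefel--Whitney numbers vanish, and $\phi([X_\sK])=0$.

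There is no real obstacle here. The only point needing care is the completeness of the invariants used: Thom's theorem for $\Omega^O_\ast$, and the fact that Pontryagin numbers alone suffice to show nontriviality in $\Omega^{SO}_\ast$. Both are standard.
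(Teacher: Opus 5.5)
Your proposal is correct and follows the paper's route: the corollary comes directly from the preceding theorem, whose proof in the case $m$ odd, $\chi(\sK)\neq 1$ odd uses $(\ref{pontryagin})$ to obtain a nonzero Pontryagin number and $(\ref{stwh})$ to show that all Stiefel--Whitney numbers vanish. Your explicit choice $p_1^n[X_\sK]=(1-\chi(\sK))m^n\neq 0$, together with the observation that odd Stiefel--Whitney classes vanish for a complex manifold, simply spells out what the paper states tersely.
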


\end{document}